\def\address#1#2{\begingroup
\noindent\parbox[t]{7.8cm}{%
\small{\scshape\ignorespaces#1}\par\vskip1ex
\noindent\small{\itshape E-mail address}%
\/: #2\par\vskip4ex}\hfill%
\endgroup}%
\author{Nathan Owen Ilten}
\title{Deformations of Smooth Toric Surfaces}
\date{}
\DeclareMathOperator{\spec}{Spec}
\DeclareMathOperator{\cone}{Cone}
\DeclareMathOperator{\tv}{TV}
\DeclareMathOperator{\conv}{Conv}
\DeclareMathOperator{\codim}{codim}
\DeclareMathOperator{\T}{\mathcal{T}}
\DeclareMathOperator{\aut}{Aut}
\DeclareMathOperator{\id}{id}
\DeclareMathOperator{\roof}{roof}
\newcommand{\CO}{\mathcal{O}}
\newtheorem{lemma}{Lemma}[section]
\newtheorem{prop}[lemma]{Proposition}
\newtheorem{cor}[lemma]{Corollary}
\newtheorem{thm}[lemma]{Theorem}
\theoremstyle{definition}
\newtheorem*{ex}{Example}
\newtheorem*{remark}{Remark}
\newtheorem*{defn}{Definition}
\begin{document}
\maketitle
\begin{abstract}
	For a complete, smooth toric variety $Y$, we describe the graded vector space $T_Y^1$. Furthermore, we show that smooth toric surfaces are unobstructed and that a smooth toric surface is rigid if and only if it is Fano. For a given toric surface we then construct homogeneous deformations by means of Minkowski decompositions of polyhedral subdivisions, compute their images under the Kodaira-Spencer map, and show that they span $T_Y^1$. 
\end{abstract}

\noindent Keywords: Toric varieties, deformation theory, T-varieties 
\\

\noindent MSC: Primary 14D15; Secondary 14M25.
\section*{Introduction}
The deformation theory of toric singularities has been rather well studied; see for example \cite{MR1329519}. It is possible to calculate the space of infinitesimal deformations, construct certain homogeneous deformations, and in some cases, construct a versal deformation all based on the combinatorics of the polyhedral cone corresponding to the toric singularity. On the other hand, with the exception of \cite{MR2092771}, \cite{MR2169828}, and some rigidity results there has, to our knowledge, been very little work done on deformations of complete toric varieties.

The goal of this paper is to start understanding the deformation theory of complete toric varieties. For simplicity's sake, we will assume that all varieties are smooth; we hope to be able to remove this assumption in a later paper. In section \ref{sec:T1} we will compute the vector space of infinitesimal deformations of an arbitrary smooth, complete toric variety. This essentially boils down to counting the number of connected components in certain graphs coming from hyperplane sections of the fan defining the toric variety. The formula takes a particularly simple form for toric surfaces. As an application we shall see that a complete, smooth toric surface is rigid if and only if it is Fano, as well as that weakly Fano varieties in higher dimensions are quite often rigid.

In section \ref{sec:homdefs}, we then concentrate on constructing homogeneous one-parameter deformations of complete, smooth toric surfaces by means of Minkowski decompositions of polyhedral subdivisions. This is analogous to the construction of deformations of toric singularities in terms of Minkowski decompositions of polytopes. For each such deformation we construct, we also compute its image under the Kodaira-Spencer map. This enables us to show that the homogeneous deformations we construct in fact form a basis for the space of infinitesimal deformations.\footnote{As pointed out by A. Mavlyutov, the construction of one-parameter
deformations as found in \cite{MR2092771} can be easily extended to construct
deformations spanning $T_Y^1$ as well.}

We assume that the reader is familiar with toric varieties as presented in \cite{MR1234037}. Throughout this paper, we will be using the following notation. As usual, $N$ will be a rank $n$ lattice, with dual lattice $M$. $N_\mathbb{Q}$ and $M_\mathbb{Q}$ shall denote the associated $\mathbb{Q}$-vector spaces. For any $M$-graded group $G$ and $u\in M$, let $G(u)$ be the grade $u$ component of $G$. $Y=\tv(\Sigma)$ will be a complete, smooth, $n$-dimensional toric variety corresponding to the complete smooth fan $\Sigma$ on $N_\mathbb{Q}$. By $\Sigma^{(k)}$ we denote the set of all $k$-dimensional cones in $\Sigma$. Let $\rho_1,\ldots,\rho_l\in\Sigma^{(1)}$ be all rays in the fan $\Sigma$. In the case $n=2$, we number these rays in some counterclockwise order; by $\rho_{l+1}$ we then mean $\rho_{1}$, etc. By $\nu(\rho_i)$ we shall denote the minimal generator of the ray $\rho_i$. Furthermore, by $D_i$ we denote the invariant divisor corresponding to $\rho_i$. 

\section{Infinitesimal Deformations}\label{sec:T1}
Let $Y$ be a smooth toric variety; we are interested in the vector space of infinitesimal deformations, which we term $T_Y^1$. Since $Y$ is smooth, we simply have $T_Y^1=H^1(Y,\T_Y)$.  Now this vector spaces carries an $M$ grading, so we can compute it by computing each homogeneous piece $T_Y^1(u)$. Furthermore, the cohomology of $\T_Y$ is closely intertwined with the cohomology of the boundary divisors of $Y$. We first shall show how to compute the cohomology of the boundary divisors and then proceed to use this to compute $T_Y^1$. We then use these computations to prove several rigidity results. 

\subsection{Cohomology of Boundary Divisors}\label{sec:cohbd}
Fix some smooth, complete variety $Y=\tv(\Sigma)$ and choose some weight $u\in M$; for $1\leq i \leq l$ we wish to calculate $H^1\left(Y,\CO(D_i)\right)(u)$. Using a result of Demazure, we show that this can be done by counting connected components in a certain graph. Let $\Gamma_i(u)$ be the graph with vertex set $V_i(u)=\left\{\nu(\rho_j),\ j\neq i\ |\ \langle \nu(\rho_j),u\rangle < 0 \right\}$ and two vertices $\nu(\rho_j)$ and $\nu(\rho_k)$ joined by an edge if and only if $\rho_j$ and $\rho_k$ are rays in some common cone of $\Sigma$.

\begin{prop}\label{prop:demazureresult}
	For any smooth, complete toric variety $Y$ we have $H^1\left(Y,\CO(D_i)\right)(u)=0$ if $\langle \nu(\rho_i),u\rangle \neq -1$.  Otherwise, $$\dim H^1\left(Y,\CO(D_i)\right)(u)= \max\{0,\dim H^0(\Gamma_i(u),\mathbb{C})-1\}.$$
\end{prop}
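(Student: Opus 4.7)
The starting point is Demazure's combinatorial formula for the cohomology of a torus-invariant divisor on a complete toric variety: for each $u \in M$ and Cartier divisor $D$, it expresses $H^p(Y,\CO(D))(u)$ as the reduced cohomology $\tilde{H}^{p-1}$ of an explicit topological subspace $Z_D(u) \subseteq N_\mathbb{Q}$ defined by a piecewise linear inequality involving the support function $\psi_D$ and pairing with $u$. Applied to $D=D_i$, whose support function satisfies $\psi_{D_i}(\nu(\rho_j)) = -\delta_{ij}$, the behavior of $Z_i(u):=Z_{D_i}(u)$ along each ray $\rho_j$ is controlled entirely by the integer $\langle u,\nu(\rho_j)\rangle - \delta_{ij}$. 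My plan is then to identify $\tilde{H}^0(Z_i(u);\mathbb{C})$ with the desired expression involving $\Gamma_i(u)$ via a case analysis on the integer $c := \langle \nu(\rho_i), u\rangle$.

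When $c \geq 0$, the ray $\rho_i$ contributes to $Z_i(u)$ generically, and I would argue that the completeness of $\Sigma$ together with the fact that the cones of $\Sigma$ tile each half-space $\{v : \pm\langle u,v\rangle > 0\}$ into a connected region forces $Z_i(u)$ to be either empty or a single connected region of $N_\mathbb{Q}$, yielding $\tilde{H}^0 = 0$. When $c \leq -2$, the ray $\rho_i$ instead plays the role of a distinguished "very bad" vertex whose star is contained in $Z_i(u)$ and which bridges together the other boundary pieces, again giving connectedness and thus $\tilde{H}^0 = 0$. In the critical case $c = -1$, the ray $\rho_i$ sits exactly on the boundary of the Demazure inequality; here I would show that $Z_i(u)$ deformation retracts onto a one-dimensional complex whose vertices are the rays of $V_i(u)$ and whose edges record which pairs lie in a common cone of $\Sigma$ --- this complex is exactly $\Gamma_i(u)$. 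The elementary identity $\dim \tilde{H}^0(X;\mathbb{C}) = \max\{0, \dim H^0(X;\mathbb{C}) - 1\}$ then delivers the stated formula.

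I expect the principal obstacle to be the deformation retraction argument in the case $c=-1$: one must check that two rays $\rho_j, \rho_k \in V_i(u)$ lie in the same connected component of $Z_i(u)$ precisely when they share a cone of $\Sigma$, i.e., precisely when they are joined by an edge in $\Gamma_i(u)$. The smoothness of $\Sigma$ (so that every cone is simplicial and $\psi_{D_i}$ is linear on each maximal cone) together with its completeness (so that $|\Sigma| = N_\mathbb{Q}$) is essential for making this combinatorial identification go through. These same tools also underlie the connectedness assertions in the two vanishing cases, so once the $c=-1$ analysis is in hand, the remaining cases should follow by parallel arguments tracking how the presence or absence of $\rho_i$ in the Demazure locus affects its topology.
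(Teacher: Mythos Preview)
Your plan follows the same skeleton as the paper: invoke Demazure's identification $H^p(Y,\CO(D_i))(u)\cong H^p(N_\mathbb{Q},U_i(u))$, pass to the long exact sequence in (relative/reduced) cohomology, and in the critical case $c=-1$ retract the Demazure set onto the graph $\Gamma_i(u)$. For $c=-1$ your outline matches the paper's argument (retract to the roof of $\Sigma$, then collapse higher simplices to the $1$-skeleton).

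Where you diverge is in the vanishing for $c\neq -1$. You propose direct connectedness arguments: half-space tilings for $c\geq 0$, and ``the star of $\rho_i$ bridges everything'' for $c\leq -2$. The paper takes a slicker indirect route. For $u\neq 0$ with $c\neq -1$ one first checks $H^0(Y,\CO(D_i))(u)=0$, so the exact sequence gives $\dim H^1(Y,\CO(D_i))(u)=\dim H^0(U_i(u),\mathbb{C})-1$. Now one observes that $U_i(\lambda u)$ is homeomorphic to $U_i(u)$ for every $\lambda\in\mathbb{N}$: the membership condition $\lambda\langle\nu(\rho_j),u\rangle<-\delta_{ij}$ is independent of $\lambda\geq 1$ for $j\neq i$, and also for $j=i$ precisely because $c\neq -1$. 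Hence $\dim H^1(Y,\CO(D_i))(\lambda u)$ is the same constant for all $\lambda$, and since $H^1(Y,\CO(D_i))$ is finite-dimensional this constant must be zero. This finiteness trick sidesteps any hands-on analysis of how carving a piece of the star of $\rho_i$ out of a half-space affects connectivity; your direct approach is workable but, especially for $c\geq 0$ in higher dimensions, would need more care than your sketch suggests.

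One slip to correct in your $c=-1$ discussion: you write that $\rho_j,\rho_k\in V_i(u)$ lie in the same component of $Z_i(u)$ ``precisely when they share a cone of $\Sigma$''. That is too strong---two such rays can be connected in $Z_i(u)$ via a path through other vertices without themselves spanning a cone. What the retraction actually gives (and what you need) is that components of $Z_i(u)$ correspond to components of $\Gamma_i(u)$, i.e.\ connectivity by \emph{paths} of such edges.
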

\begin{proof}
	Let $U_i(u)=\{v\in N_\mathbb{Q} | \langle v,u\rangle < h(v)\}$, where $h$ is the piecewise linear function on $\Sigma$ given by $h(\nu(\rho_i))=-1$, $h(\nu(\rho_j))=0$ for $j\neq i$. Then by \cite{MR0284446}, $H^p\left(Y,\CO(D_i)\right)(u)\cong H^p (N_\mathbb{Q},U_i(u))$ for all $p\geq 0$. Thus, we have the following exact sequence coming from relative cohomology:
	\begin{equation*}
		0\to H^0(Y,\CO(D_i))(u) \to \mathbb{C} \to H^0(U_i(u),\mathbb{C}) \to H^1(Y,\CO(D_i))(u)\to 0.
	\end{equation*}
	Suppose that $\langle \nu(\rho_i),u\rangle \neq -1$. If $u=0$ then $U_i(u)=\emptyset$ and thus $H^1(Y,\CO(D_i))(u)=0$. If instead $u\neq 0$, then we also have $H^1(Y,\CO(D_i))(u)=0$. Indeed, a direct calculation shows that $H^0(Y,\CO(D_i))(u)=0$. Furthermore, $U_i(\lambda u)$ is homeomorphic to $U_i(u)$ for all $\lambda \in \mathbb{N}$. Since $H^1(Y,\CO(D_i))$ is finite dimensional, it follows from the above exact sequence that $\dim H^0(U_i,\mathbb{C})(\lambda u)= \dim H^0(U_i,\mathbb{C})(u)=1$ and thus that $H^1(Y,\CO(D_i))(u)=0$.

	We can now assume that $\langle \nu(\rho_i),u\rangle = -1$. In this case, $H^0(U_i(u),\mathbb{C}) =H^0(\Gamma_i(u),\mathbb{C})$. Indeed, $U_i(u)$ can be retracted to  $\widetilde{U}_i(u)=U_i(u)\cap \roof(\Sigma)$, where $\roof(\Sigma)$ is the roof of the fan $\Sigma$. Now for each simplex $S$ of dimension larger than one in $\roof(\Sigma)$, $\widetilde{U}_i(u)\cap S$ can be replaced in $\widetilde{U}_i(u)$ with $\widetilde{U}_i(u)\cap \partial S$ without changing the connectivity of the set. Thus, we can replace $\widetilde{U}_i(u)$ by its intersection with the union of all one-simplices of $\roof(\Sigma)$. This is nothing other than $\Gamma_i(u)$.

	Now, one easily checks that $H^0(Y,\CO(D_i))=0$ unless $\Gamma_i(u)=\emptyset$. The formula then follows from the exact sequence.
\end{proof}

In dimensions two and three, we can calculate the first cohomology groups by counting connected components of an even simpler graph. Indeed, let $\Gamma_i^\circ(u)$ be the subgraph of $\Gamma_i(u)$ induced by those vertices whose corresponding rays share a common cone with $\rho_i$.
\begin{lemma}\label{lemma:nfisgood}
For $u\in M$ such that $\langle \nu(\rho_i),u\rangle =-1$, we have 
\begin{equation*}
	H^0(\Gamma_i(u),\mathbb{C})\leq H^0(\Gamma_i^\circ(u),\mathbb{C})
\end{equation*}
with equality holding if $n=\dim N\leq 3$.
\end{lemma}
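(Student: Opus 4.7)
The plan is to reinstate $\nu(\rho_i)$ as a vertex, exploit the connectedness of the resulting ``completed'' graph to deduce the inequality, and then handle equality in low dimension with a separate injectivity argument.  Define $\Gamma_i^+(u)$ to be the graph on vertex set $V_i(u)\cup\{\nu(\rho_i)\}$ whose edges follow the same rule as for $\Gamma_i(u)$: two distinct vertices are joined iff their rays lie in a common cone of $\Sigma$. The neighbors of $\nu(\rho_i)$ in $\Gamma_i^+(u)$ are exactly the vertices of $\Gamma_i^\circ(u)$, and $\Gamma_i(u)$ is obtained from $\Gamma_i^+(u)$ by deleting the vertex $\nu(\rho_i)$.

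Rerunning the retraction argument of Proposition~\ref{prop:demazureresult} with the piecewise linear function $h$ replaced by the zero function identifies $\Gamma_i^+(u)$ as a deformation retract of the open half-space $U^+=\{v\in N_\mathbb{Q}:\langle v,u\rangle<0\}$: this set is scaling-invariant, so first retracts onto $U^+\cap\roof(\Sigma)$, which then collapses onto its $1$-skeleton exactly as in that proof. Since $U^+$ is convex it is connected, hence so is $\Gamma_i^+(u)$. Removing a vertex from a connected graph yields a graph in which every component contains a neighbor of the removed vertex; applied to $\nu(\rho_i)$, this shows every component of $\Gamma_i(u)$ contains at least one vertex of $V^\circ$. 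The natural inclusion from components of $\Gamma_i^\circ(u)$ into components of $\Gamma_i(u)$ is therefore surjective, yielding the inequality.

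For equality when $n\leq 3$ this inclusion must also be injective: any two vertices of $V^\circ$ in the same component of $\Gamma_i(u)$ should already be connected in $\Gamma_i^\circ(u)$. For $n=2$ this is direct: the rays $\rho_j$ with $\langle\nu(\rho_j),u\rangle<0$ form a contiguous arc in the cyclic order on the rays of $\Sigma$ (because the half-plane $\{\langle\cdot,u\rangle<0\}$ is convex), and since $V^\circ\subseteq\{\nu(\rho_{i-1}),\nu(\rho_{i+1})\}$, the analysis reduces to the two cases $l>3$ (in which removing $\nu(\rho_i)$ separates the two potential $V^\circ$-vertices in $\Gamma_i(u)$ as well) and $l=3$ (in which $\rho_{i-1}+\rho_{i+1}$ is a cone of $\Sigma$, joining them also in $\Gamma_i^\circ(u)$). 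For $n=3$ I would work within the link of $\rho_i$ — which by completeness of $\Sigma$ is itself a complete smooth two-dimensional fan in $N_\mathbb{Q}/\mathbb{Q}\nu(\rho_i)$, with rays corresponding to those $\rho_j$ satisfying $\rho_i+\rho_j\in\Sigma^{(2)}$ — and attempt to deform any path in $\Gamma_i(u)$ between two $V^\circ$-vertices into one using only $V^\circ$-vertices, exploiting that maximal cones of $\Sigma$ have at most three rays so that any intermediate vertex $\nu(\rho_a)\notin V^\circ$ is locally forced into a rigid combinatorial position that permits rerouting.

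The main obstacle is carrying out this rerouting step in dimension three.  A subtle point is that $V^\circ$, viewed as a subset of the link's rays, need not form a contiguous arc in the link's cyclic order, because $u$ has $\langle\nu(\rho_i),u\rangle=-1\neq 0$ and hence does not descend to a linear form on $N_\mathbb{Q}/\mathbb{Q}\nu(\rho_i)$.  Thus one cannot simply invoke the $n=2$ analysis inside the link, and a careful combinatorial case analysis — using the smoothness of $\Sigma$ together with the fact that an intermediate vertex shares no cone with $\rho_i$ — is required. The hypothesis $n\leq 3$ is essential: in higher dimensions maximal cones contain more rays, leaving room for obstructions to the rerouting.
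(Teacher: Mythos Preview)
Your argument for the inequality matches the paper's exactly: both reinstate the vertex $\nu(\rho_i)$, observe that the enlarged graph (the paper calls it $\widetilde{\Gamma}_i(u)$) is connected because its vertex set is $\{\nu(\rho_j):\langle\nu(\rho_j),u\rangle<0\}$ and hence has the connectivity of a convex set, and conclude that every component of $\Gamma_i(u)$ meets $\Gamma_i^\circ(u)$. Your treatment of $n=2$ is also fine and just more explicit than the paper's ``immediately obvious''.

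The $n=3$ case, however, is a genuine gap: you yourself flag the rerouting step as ``the main obstacle'' and do not carry it out, and the difficulty you identify (that $u$ does not descend to a linear form on $N_\mathbb{Q}/\mathbb{Q}\nu(\rho_i)$, so $V^\circ$ need not be an arc in the link) is real. The paper sidesteps the link entirely with a short geometric argument. Embed $\Gamma_i(u)$ as a planar graph in the affine plane $\{\langle\cdot,u\rangle=-1\}$ by sending each vertex $\nu(\rho_j)$ to the point $\rho_j\cap\{\langle\cdot,u\rangle=-1\}$; each edge then sits in the intersection of the corresponding two-cone of $\Sigma$ with this plane. Now for every ray $\rho_k$ sharing a cone with $\rho_i$ but \emph{not} in $\Gamma_i^\circ(u)$ (i.e.\ $\langle\nu(\rho_k),u\rangle\geq 0$), the two-cone $\rho_i+\rho_k$ still meets the plane in a ray $\tau_k$ emanating from $\nu(\rho_i)$, since $\langle\nu(\rho_i),u\rangle=-1<0$. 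Because distinct two-cones of a fan meet only in faces, no edge of $\Gamma_i(u)$ crosses any $\tau_k$. These rays $\tau_k$ therefore cut the plane into sectors that separate $\Gamma_i(u)$ and $\Gamma_i^\circ(u)$ into the same connected components, giving the equality at once. This avoids both the descent issue and any combinatorial case analysis, and is where your proposal should be replaced.
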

\begin{proof}
	Any connected component of $\Gamma_i(u)$ has non-empty intersection with $\Gamma_i^\circ(u)$. Indeed, let $\widetilde{\Gamma}_i(u)$ be the graph with vertex set $V_i(u)\cup \{\nu(\rho_i)\}$ and edges induced by common cone membership as before. Then the vertex set of $\widetilde{\Gamma}_i(u)$ consists of exact those $\nu(\rho_j)$ such that $\langle \nu(\rho_j),u\rangle <0$. Thus, $\widetilde{\Gamma}_i(u)$ has the same number of connected components as a convex set, namely one. The inequality then follows.

	If $n=2$ the equality is immediately obvious. For $n=3$, we can consider $\Gamma_i(u)$ as a planar graph embedded in the plane $\langle \cdot, u \rangle = -1$ by identifying each $\nu(\rho_j))$ with the image of $\rho_j$ in this plane. If a ray $\rho_k$  sharing a common cone with $\rho_i$ does not intersect this plane, it is not in $\Gamma_i^\circ(u) \subset \Gamma_i(u)$. The two-dimensional cone generated by $\rho_i$ and $\rho_k$ intersects the above plane in a ray $\tau_k$, which no edges in $\Gamma_i(u)$ intersect. One easily sees that such $\tau_k$ partition both $\Gamma_i^\circ (u)$ and $\Gamma_i(u)$ into their connected components, so that the desired equality must hold.
\end{proof}

\subsection{The Generalized Euler Sequence}\label{sec:es}
We will now use our knowledge of the first cohomology groups of the boundary divisors to compute $T_Y^1$. Indeed, for a smooth, complete toric variety, the higher cohomology groups of the tangent bundle are isomorphic to the sum of those of the boundary divisors via the generalized Euler sequence:

\begin{lemma}
	\label{lemma:t1t2es}\cite{MR1272701}
	Let $Y$ be a smooth, complete toric variety with boundary divisors $D_1,\ldots,D_l$. Then $H^i(Y,\T_Y)\cong\bigoplus_{i=1}^l H^i(Y,\CO(D_i))$ as $M$-graded groups for $i\geq 1$.
\end{lemma}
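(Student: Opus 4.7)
The plan is to deduce the isomorphism from the generalized Euler sequence for a smooth complete toric variety, which has the equivariant short exact form
\[
0 \to \CO_Y^{\oplus(l-n)} \to \bigoplus_{i=1}^l \CO_Y(D_i) \to \T_Y \to 0.
\]
This is obtained by dualizing the standard toric Euler sequence $0 \to \Omega^1_Y \to \bigoplus_i \CO_Y(-D_i) \to \mathrm{Pic}(Y)\otimes\CO_Y \to 0$; after dualizing, the left-hand term becomes $\mathrm{Pic}(Y)^\vee \otimes \CO_Y$, a trivial bundle of rank $l - n$. Smoothness of $Y$ is used here to guarantee that $\Omega^1_Y$ is locally free and that $\mathrm{Pic}(Y) = \mathrm{Cl}(Y)$. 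Because the sequence is built entirely from torus-equivariant data (the invariant divisors $D_i$ and the character lattice $M$), every map is $M$-homogeneous, so the induced long exact sequence in cohomology lives in the category of $M$-graded vector spaces.

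The second step is to invoke the standard vanishing $H^j(Y, \CO_Y) = 0$ for all $j \geq 1$, which holds on any complete toric variety. This can be read off the same theorem of Demazure used in the proof of Proposition~\ref{prop:demazureresult}: for the trivial divisor and $u = 0$ the relevant subset of $N_\mathbb{Q}$ is empty, while for $u \neq 0$ one checks directly that it is contractible. Granted this vanishing, the flanking groups $H^i\bigl(Y, \CO_Y^{\oplus(l-n)}\bigr)$ and $H^{i+1}\bigl(Y, \CO_Y^{\oplus(l-n)}\bigr)$ both vanish whenever $i \geq 1$, so the relevant portion of the long exact sequence collapses to the asserted isomorphism $\bigoplus_i H^i(Y, \CO(D_i)) \cong H^i(Y, \T_Y)$.

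There is essentially no obstacle here: the statement is a formal consequence of the Euler sequence together with the vanishing of $H^{\geq 1}(\CO_Y)$. The one point that deserves a moment's verification is $M$-equivariance of every map in the Euler sequence, but this is automatic from the equivariant construction, so the isomorphism respects the $M$-grading piecewise.
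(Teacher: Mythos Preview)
Your argument is correct and follows exactly the paper's approach: take the long exact cohomology sequence of the generalized Euler sequence, use the vanishing of $H^j(Y,\CO_Y)$ for $j\geq 1$ on a complete toric variety to kill the flanking terms, and conclude. The paper states the left-hand term as $N_1(Y)\otimes\CO_Y$ rather than $\CO_Y^{\oplus(l-n)}$, but this is the same trivial bundle, and your additional remarks on equivariance and the source of the vanishing only make the argument more explicit.
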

\begin{proof}
		Taking the long exact cohomology sequence coming from the generalized Euler sequence 
		\begin{equation}    \label{eqn:es}           
			0 \mapsto N_1(Y) \otimes \CO_Y \to \bigoplus_{i=1}^l \CO(D_i) \to \T_Y \to 0 \,,\end{equation}
                and using the fact that $H^i(Y,N_1\otimes \CO_Y)$ vanishes for $i\geq 1$ gives us $$H^i(Y,\T_Y)\cong H^i(Y,\bigoplus_{i=1}^l \CO(D_i))\cong\bigoplus_{i=1}^l H^i(Y,\CO(D_i))$$ for $i\geq 1$. 
\end{proof}

Combining this with the cohomology computations of the previous section yields the following:
\begin{prop}\label{prop:t1}
For a smooth, complete toric variety $Y$, 
\begin{equation*}
	\dim T_Y^1(u)=\dim H^1(Y,\T_Y)(u)= \sum_{\langle \nu(\rho_i),u \rangle =-1} \max \{ 0,\dim H^0(\Gamma_i(u),\mathbb{C})-1\},
\end{equation*}
where if $\dim n=2,3$ we can replace the $\Gamma_i(u)$ with $\Gamma_i^\circ(u)$. 
\end{prop}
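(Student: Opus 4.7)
The proposition is a direct synthesis of the tools assembled in the previous two subsections, so my plan is essentially to chain together three results and verify that the gradings match up.

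First, since $Y$ is smooth, $T_Y^1$ equals $H^1(Y,\T_Y)$, and this identification respects the $M$-grading coming from the torus action. Next, I would invoke Lemma \ref{lemma:t1t2es} to split
\[
H^1(Y,\T_Y) \cong \bigoplus_{i=1}^l H^1(Y,\CO(D_i))
\]
as $M$-graded vector spaces. The key point to check is that the isomorphism of Lemma \ref{lemma:t1t2es} is compatible with the $M$-grading; this follows because the generalized Euler sequence \eqref{eqn:es} is a sequence of $T$-equivariant sheaves, so the connecting maps in the long exact sequence are homogeneous of degree zero.

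Having reduced to the boundary divisors, I would then take the grade $u$ component and feed each summand through Proposition \ref{prop:demazureresult}. That result tells us $H^1(Y,\CO(D_i))(u) = 0$ unless $\langle \nu(\rho_i), u\rangle = -1$, in which case its dimension equals $\max\{0, \dim H^0(\Gamma_i(u),\mathbb{C}) - 1\}$. Summing over $i$ yields the claimed formula. For the final clause, in dimensions $n = 2$ and $n = 3$, Lemma \ref{lemma:nfisgood} supplies the equality $\dim H^0(\Gamma_i(u),\mathbb{C}) = \dim H^0(\Gamma_i^\circ(u),\mathbb{C})$, so we may substitute $\Gamma_i^\circ(u)$ for $\Gamma_i(u)$ in the sum.

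There is no genuine obstacle here; the proof is essentially bookkeeping. The only subtlety to be careful about is the compatibility of the Euler sequence splitting with the $M$-grading, but this is automatic from equivariance. Consequently, I expect the written proof to occupy only a few lines, amounting to the citations "combine Lemma \ref{lemma:t1t2es} with Proposition \ref{prop:demazureresult} (and Lemma \ref{lemma:nfisgood} in low dimensions)."
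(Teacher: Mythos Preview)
Your proposal is correct and matches the paper's approach exactly: the paper does not even write out a proof environment for this proposition, stating only that it follows by ``combining this with the cohomology computations of the previous section,'' i.e., Lemma~\ref{lemma:t1t2es} together with Proposition~\ref{prop:demazureresult} (and Lemma~\ref{lemma:nfisgood} for the low-dimensional refinement). Your write-up simply makes explicit the bookkeeping that the paper leaves implicit.
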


In the case of a toric surface, this becomes even more explicit:
\begin{cor}\label{cor:t1forsurfaces}
For a smooth, complete toric surface $Y=\tv(\Sigma)$
\begin{equation*}
\dim T_Y^1(u)=\#\left\{\rho_i\in\Sigma^{(1)}\left| \begin{array}{c}
		\langle \nu(\rho_i),u\rangle=-1\\
		 \langle \nu(\rho_{i\pm1}),u\rangle<0\end{array}\right.\right\}
\end{equation*}
for all $u\in M$. Furthermore, $T_Y^2=0$, i.e. $Y$ is unobstructed.
\end{cor}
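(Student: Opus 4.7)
The plan is to derive both parts of the corollary from Proposition \ref{prop:t1} and Lemma \ref{lemma:t1t2es}, the first via a short combinatorial analysis of $\Gamma_i^\circ(u)$ in dimension two, the second via Serre duality applied to the line bundles $\CO(D_i)$.

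For the dimension formula, I would first pin down the shape of $\Gamma_i^\circ(u)$. In a complete fan on a rank-two lattice, the maximal cones are two-dimensional and spanned by cyclically consecutive pairs of rays, so the only rays sharing a cone with $\rho_i$ are $\rho_{i-1}$ and $\rho_{i+1}$. Hence $\Gamma_i^\circ(u)$ has vertex set contained in $\{\nu(\rho_{i-1}),\nu(\rho_{i+1})\}$, consisting of those vertices whose pairing with $u$ is strictly negative. Furthermore, $\rho_{i-1}$ and $\rho_{i+1}$ themselves cannot share a cone: any such cone would be two-dimensional and spanned by this pair, forcing $\rho_i$ to lie in its interior and contradicting that $\rho_i$ is a ray of $\Sigma$. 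Consequently $\Gamma_i^\circ(u)$ has no edges, and $\dim H^0(\Gamma_i^\circ(u),\mathbb{C})$ simply counts vertices. Plugging into Proposition \ref{prop:t1}, the summand $\max\{0,\dim H^0(\Gamma_i^\circ(u),\mathbb{C})-1\}$ contributes $1$ precisely when both $\langle \nu(\rho_{i\pm 1}),u\rangle<0$ and $0$ otherwise, yielding the stated count.

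For $T_Y^2=0$, Lemma \ref{lemma:t1t2es} reduces matters to $H^2(Y,\CO(D_i))=0$ for each $i$. I would apply Serre duality on the smooth projective surface $Y$ to convert this to $H^0(Y,\CO(K_Y-D_i))=0$. Since $K_Y\sim -\sum_j D_j$, the divisor $K_Y-D_i$ has coefficient $-1$ on each $D_j$ with $j\neq i$ and coefficient $-2$ on $D_i$, so a torus-invariant section in weight $u$ would require $\langle \nu(\rho_j),u\rangle\geq 1$ for every ray. Completeness of $\Sigma$ forbids this: for any $u\neq 0$ we can pick $v\in N_{\mathbb{Q}}$ with $\langle v,u\rangle<0$, and expressing $v$ as a nonnegative combination of the rays of any cone containing it forces some $\langle\nu(\rho_j),u\rangle<0$; the case $u=0$ clearly fails the inequalities as well. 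Hence $H^0(Y,\CO(K_Y-D_i))=0$, giving $T_Y^2=0$.

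The main potential obstacle is more notational than conceptual: one must keep the cyclic indexing of adjacent rays straight and confirm cleanly that $\Gamma_i^\circ(u)$ has no edges. An alternative to the Serre-duality step would be to invoke Demazure's isomorphism $H^p(Y,\CO(D_i))(u)\cong H^p(N_{\mathbb{Q}},U_i(u))$ already used in Proposition \ref{prop:demazureresult}: for $n=2$, contractibility of $N_{\mathbb{Q}}$ gives $H^2(N_{\mathbb{Q}},U_i(u))\cong H^1(U_i(u))$, and a planar connectivity argument shows the latter vanishes. I would nonetheless prefer the Serre-duality path for its brevity and its avoidance of ad hoc planar topology.
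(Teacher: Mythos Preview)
Your proof is correct and matches the paper's approach: both parts use Proposition~\ref{prop:t1} with $\Gamma_i^\circ$ for the dimension formula and Serre duality via Lemma~\ref{lemma:t1t2es} for the vanishing of $T_Y^2$. One minor slip worth fixing: the assertion that $\rho_{i-1}$ and $\rho_{i+1}$ cannot share a cone fails for $l=3$ (e.g.\ $\mathbb{P}^2$), where the cone they span lies on the side \emph{opposite} $\rho_i$ rather than containing it; this is harmless, since in that case completeness prevents all three ray generators from pairing negatively with a nonzero $u$, so $\Gamma_i^\circ(u)$ never acquires two vertices anyway.
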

\begin{proof}
	The first claim follows from the proposition and the fact that $\dim H^0(\Gamma_i^\circ(u),\mathbb{C})\leq 2$, with equality if and only if $\langle\nu(\rho_i),u\rangle=-1$ and $\langle \nu(\rho_{i\pm1}),u\rangle<0$. For the second, consider that $$H^2(Y,\T_Y)\cong \bigoplus H^2(Y,\CO(D_i))\cong \bigoplus H^0(Y,\CO(-D_i-\sum_{j=1}^l D_j))=0$$ by Serre duality.
\end{proof}

The following corollary is then immediate:
\begin{cor}
Let $Y$ be a smooth, complete toric surface. Let $\widetilde{Y}$ be an equivariant blow-up of $Y$. Then $T_{Y}^1\subset T_{\widetilde{Y}}^1$.
\end{cor}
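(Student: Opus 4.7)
The plan is to apply Corollary~\ref{cor:t1forsurfaces} to both $Y$ and $\widetilde{Y}$ and compare the graded pieces weight by weight. Every equivariant blow-up of a smooth toric surface factors as a sequence of toric blow-ups at torus-fixed points, each of which corresponds to a star subdivision of a two-dimensional cone $\langle \rho_j,\rho_{j+1}\rangle\in\Sigma^{(2)}$. Since the subdivision must preserve smoothness of the fan, the new ray $\rho^\ast$ has minimal generator $\nu(\rho^\ast)=\nu(\rho_j)+\nu(\rho_{j+1})$. By induction, it suffices to treat the case of a single such subdivision.

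Fix $u\in M$. By Corollary~\ref{cor:t1forsurfaces}, $\dim T_Y^1(u)$ counts rays $\rho_i\in\Sigma^{(1)}$ with $\langle \nu(\rho_i),u\rangle=-1$ and $\langle \nu(\rho_{i\pm 1}),u\rangle<0$, and similarly for $\widetilde{Y}$ with the cyclic neighbor relation in $\widetilde{\Sigma}$. I would show that each ray contributing on the $Y$-side continues to contribute on the $\widetilde{Y}$-side. For indices $i\notin\{j,j+1\}$ the cyclic neighbors are unchanged, so there is nothing to check. For $\rho_j$ (and symmetrically $\rho_{j+1}$), the only neighbor that has changed is $\rho_{j+1}$, which has been replaced by $\rho^\ast$; assuming $\rho_j$ contributes to $T_Y^1(u)$, linearity gives
\begin{equation*}
\langle \nu(\rho^\ast),u\rangle = \langle \nu(\rho_j),u\rangle + \langle \nu(\rho_{j+1}),u\rangle = -1 + \langle \nu(\rho_{j+1}),u\rangle < 0,
\end{equation*}
so the defining condition still holds in $\widetilde{\Sigma}$. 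This yields $\dim T_Y^1(u)\leq \dim T_{\widetilde{Y}}^1(u)$ for every $u\in M$, which is the content of the graded inclusion $T_Y^1 \subset T_{\widetilde{Y}}^1$.

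The main obstacle is essentially bookkeeping rather than mathematical difficulty: one must correctly identify which rays are affected by the star subdivision and confirm that the linearity computation above goes through. A minor side point is that $\rho^\ast$ itself might contribute additional dimensions to $T_{\widetilde{Y}}^1(u)$, but this only strengthens the inequality and does not interfere with the claim.
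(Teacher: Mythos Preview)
Your argument is correct and is exactly the approach the paper has in mind: the corollary is stated as immediate from Corollary~\ref{cor:t1forsurfaces}, and you have simply written out the straightforward weight-by-weight verification that each ray counted on the $Y$-side is still counted on the $\widetilde{Y}$-side after a single star subdivision. The paper gives no further details, so there is nothing to compare beyond this.
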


\begin{figure}
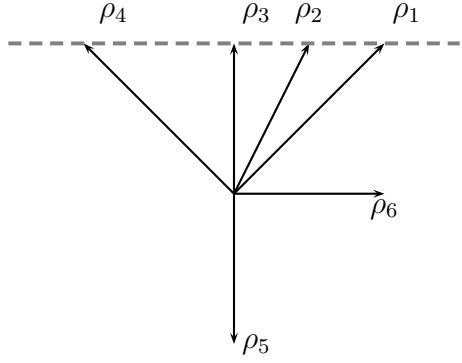

	\begin{center}
		\psset{unit=2cm}
	\pspicture(-1.5,-1)(1.5,1.3)
	\psline{->}(0, 1)
	\psline{->}(1, 0)
	\psline{->}(-1, 1)
	\psline{->}(1, 1)
	\psline{->}(0, -1)
	\psline{->}(.5, 1)
\rput(.5,1.2){$\rho_2$}
\rput(1.15,1.2){$\rho_1$}
\rput(0.15,1.2){$\rho_3$}
\rput(-.80,1.2){$\rho_4$}
\rput(0.15,-1){$\rho_5$}
\rput(1,-.1){$\rho_6$}
\psset{linecolor=gray,linestyle=dashed,linewidth=1.5pt}
\psline(-1.5,1)(1.5,1)
	\endpspicture
\end{center}
\caption{$\mathcal{F}_1$ with two successive blowups}\label{fig:ex1fan}	\end{figure}

\begin{ex}
	We consider the toric surface $Y$ whose fan $\Sigma$ has rays through  $(1,1)$, $(1,2)$, $(0,1)$, $(-1,1)$, $(0,-1)$, and $(1,0)$ as pictured in figure \ref{fig:ex1fan}. This is simply the first Hirzebruch $\mathcal{F}_1$ with two successive blowups. One easily checks that the only degrees $u$ for which $T_Y^1(u)$ isn't trivial are $[0,-1]$, $[-1,0]$, and $[1,-1]$. For $u=[0,-1]$, the dashed gray line in figure \ref{fig:ex1fan} marks the hyperplane $\langle \cdot, u\rangle = -1$. One thus sees that only the ray $\rho_3$ satisfies the conditions $\langle \nu(\rho_i),u\rangle =-1$, $\langle \nu(\rho_{i\pm 1}),u\rangle <0$ and thus, $\dim T_Y^1([0,-1])=1$. Likewise, for $u=[-1,0]$ and $u=[1,-1]$ the rays $\rho_1$ and $\rho_3$ respectively satisfy the conditions and so $\dim T_Y^1([-1,0])=1$ and $\dim T_Y^1([-1,1])=1$ as well.
\end{ex}

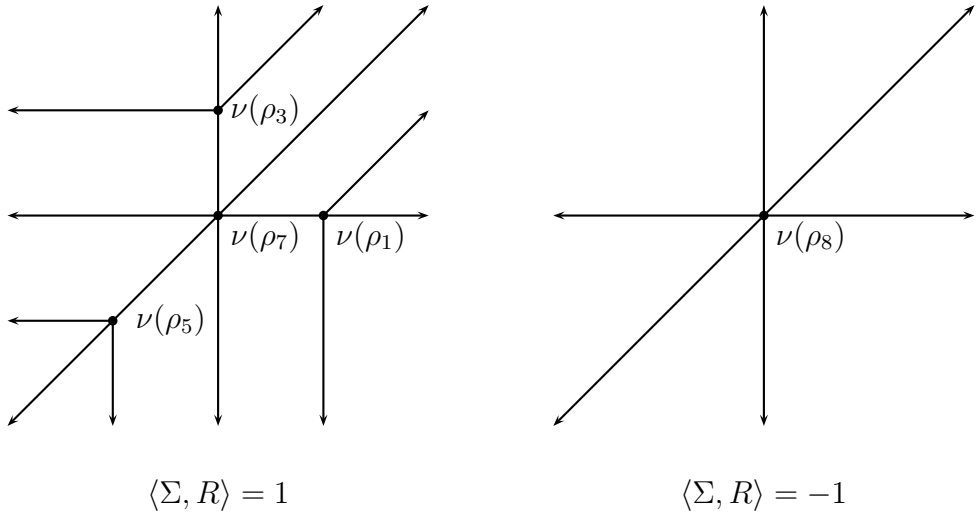
\begin{figure}
	\begin{center}
		\begin{displaymath}
			\begin{array}{c @{\qquad\qquad} c}
		\psset{unit=1.4cm}
	\begin{pspicture}(-2,-2)(2,2)
	\psline{->}(2,0)
	\psline{->}(0,2)
	\psline{->}(-2,-2)
	\psline{->}(-2,0)
	\psline{->}(2,2)
	\psline{->}(0,-2)
	\psline{->}(1,0)(1,-2)
	\psline{->}(-1,-1)(-1,-2)
	\psline{->}(-1,-1)(-2,-1)
	\psline{->}(0,1)(-2,1)
	\psline{->}(0,1)(1,2)
	\psline{->}(1,0)(2,1)
	\psdots(0,0)(-1,-1)(1,0)(0,1)
	\rput(1.45,-.2){$\nu(\rho_1)$}
	\rput(.45,1){$\nu(\rho_3)$}
	\rput(-.45,-1){$\nu(\rho_5)$}
	\rput(.45,-.2){$\nu(\rho_7)$}
	\end{pspicture}&

	\psset{unit=1.4cm}
	\begin{pspicture}(-2,-2)(2,2)
	\psline{->}(2,0)
	\psline{->}(0,2)
	\psline{->}(-2,-2)
	\psline{->}(-2,0)
	\psline{->}(2,2)
	\psline{->}(0,-2)
	\psdot(0,0)
	\rput(.45,-.2){$\nu(\rho_8)$}
	\end{pspicture}\\
	\\
	\langle \Sigma,R\rangle=1&\langle \Sigma,R\rangle=-1
\end{array}
\end{displaymath}
\end{center}
\caption{Hyperplane sections of three-dimensional fan}\label{fig:3dex}
\end{figure}

\begin{ex}
	We also consider a three-dimensional example. Let $\rho_1=\langle(1,0,1)\rangle$, $\rho_2=\langle(1,1,0)\rangle$, $\rho_3=\langle(0,1,1)\rangle$, $\rho_4=\langle(-1,0,0)\rangle$, $\rho_5\langle(-1,-1,1)\rangle$, $\rho_6=\rho_0=\langle(0,-1,0)\rangle$, $\rho_7=\langle(0,0,1)\rangle$, and $\rho_8=-\rho_7$. Now, let $\Sigma$ be the fan with top-dimensional cones generated by $\rho_i,\rho_{i+1},\rho_7$ or by $\rho_i,\rho_{i+1},\rho_8$ for $0\leq i <6$. If we set $-R=[0,0,-1]$, the affine slices $\langle \Sigma,R\rangle=\pm 1$ are pictured in figure \ref{fig:3dex}. We shall consider infinitesimal deformations of the threefold $Y=\tv(\Sigma)$.

	Note that the graph $\Gamma_7(-R)=\Gamma_7^\circ(-R)$ consists of the three non-connected vertices $\nu(\rho_1)$, $\nu(\rho_3)$, and $\nu(\rho_5)$. Thus, $\dim H^1(Y,\CO(D_7))(-R)=2$. One also easily checks that this is the only degree/ray combination for which the first cohomology doesn't vanish. Thus, $\dim T_Y^1=\dim T_Y^1(-R)=2$.
\end{ex}

\subsection{Rigidity Results}

In \cite{MR1420707} it was shown that a smooth, complete toric Fano variety is rigid. It is possible to use our explicit cohomology calculations to generalize this slightly; in many cases, it suffices to assume weakly Fano:
\begin{cor}
	Let $Y$ be a complete, smooth, weakly Fano toric variety, and assume that there is no equivariant embedding $\widetilde{A}_1\times (\mathbb{C}^*)^{n-2}\hookrightarrow Y$, where $\widetilde{A}_1$ is the minimal resolution of a toric $A_1$ singularity. Then $Y$ is rigid.
\end{cor}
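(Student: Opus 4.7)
By contrapositive, I would assume $T_Y^1\neq 0$ and produce the excluded equivariant embedding. Proposition~\ref{prop:t1} together with Lemma~\ref{lemma:nfisgood} gives $u\in M$ and a ray $\rho_i$ with $\langle\nu(\rho_i),u\rangle=-1$ such that $\Gamma_i^\circ(u)$ has at least two connected components. The connectedness of $\widetilde{\Gamma}_i(u)$ established in the proof of Lemma~\ref{lemma:nfisgood} forces each component of $\Gamma_i^\circ(u)$ to meet the set of rays sharing a $2$-cone with $\rho_i$. In the smooth complete star fan $\Sigma_i$ of $\rho_i$ -- a fan of dimension $n-1$ whose $1$-skeleton is connected -- any path in the $1$-skeleton between rays belonging to two distinct components of $\Gamma_i^\circ(u)$ must contain a transition from a ``marked'' vertex ($\langle\nu(\cdot),u\rangle<0$) to an ``unmarked'' one ($\langle\nu(\cdot),u\rangle\geq 0$); picking such a transition yields adjacent rays $\rho_c,\rho_d$ in $\Sigma_i$ with $\langle\nu(\rho_c),u\rangle<0$ and $\langle\nu(\rho_d),u\rangle\geq 0$, so that $\rho_i+\rho_c+\rho_d$ is a face of some cone of $\Sigma$.

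Next I would pick a maximal cone $\sigma\supseteq\rho_i+\rho_c+\rho_d$, let $\tau$ denote the codimension-one face $\sigma\setminus\{\rho_c\}$, and let $\rho_{c'}$ be the ray of the adjacent maximal cone on the opposite side of $\tau$. The associated wall relation reads
\[
\nu(\rho_c)+\nu(\rho_{c'})+c_i\nu(\rho_i)+c_d\nu(\rho_d)+\sum_{e}c_e\nu(\rho_e)=0,
\]
and nefness of $-K_Y$ gives $c_i+c_d+\sum_e c_e\geq -2$. Pairing with $u$, using $\langle\nu(\rho_i),u\rangle=-1$, $\langle\nu(\rho_c),u\rangle\leq -1$, $\langle\nu(\rho_d),u\rangle\geq 0$, and integrality, I expect these inequalities to tighten into equalities for a suitable choice of $u$ and of the pair $(\rho_c,\rho_d)$: one is forced to have $c_i=2$, $c_d=c_e=0$, $\langle\nu(\rho_c),u\rangle=\langle\nu(\rho_{c'}),u\rangle=-1$, and the wall relation collapses to $\nu(\rho_c)+\nu(\rho_{c'})=2\nu(\rho_i)$.

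This equation places $\rho_c,\rho_i,\rho_{c'}$ in a common $2$-dimensional sublattice $N'\subset N$. Together with the $2$-cones $\rho_i+\rho_c$ (a face of $\sigma$) and $\rho_i+\rho_{c'}$ (a face of the neighboring maximal cone), they form a subfan of $\Sigma$ isomorphic to the fan of $\widetilde{A}_1\times(\mathbb{C}^*)^{n-2}$, giving the forbidden equivariant open embedding and the desired contradiction.

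The hardest step is tightening the inequalities when $n\geq 4$: the wall $\tau$ then carries $n-3$ extra rays $\rho_e$ whose pairings with $u$ are a priori unconstrained. Bigness of $-K_Y$, which makes the polytope of $-K_Y$ full-dimensional, should supply enough freedom to replace $u$ by a perturbation with $\langle\nu(\rho_e),u\rangle=0$ for all such $\rho_e$, after which the $c_e$ are forced to vanish. In dimensions $n=2,3$ this technicality is absent and the argument is immediate.
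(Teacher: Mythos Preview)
Your wall-relation approach differs substantially from the paper's, and the tightening step has genuine gaps beyond the one you flag. The paper argues directly and uniformly in all dimensions: given $u$ and $\rho_i$ with $\langle\nu(\rho_i),u\rangle=-1$, take any two vertices $\nu(\rho_j),\nu(\rho_k)\in\Gamma_i^\circ(u)$ and project the straight segment $\overline{\nu(\rho_j)\,\nu(\rho_k)}$ onto the roof of $\Sigma$. Weakly Fano means the roof is concave, so every point $v$ on the projected path satisfies $\langle v,u\rangle\le\max\{\langle\nu(\rho_j),u\rangle,\langle\nu(\rho_k),u\rangle\}\le -1$; hence the path lies in $U_i(u)$ and connects $\nu(\rho_j)$ to $\nu(\rho_k)$ in $\Gamma_i(u)$ --- unless the segment meets $\rho_i$ itself. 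In that degenerate case concavity forces $\langle\nu(\rho_j),u\rangle=\langle\nu(\rho_k),u\rangle=-1$, and then smoothness of the $2$-cones $\rho_i+\rho_j$ and $\rho_i+\rho_k$ pins down $\nu(\rho_j)+\nu(\rho_k)=2\nu(\rho_i)$, yielding precisely the forbidden $\widetilde A_1\times(\mathbb C^*)^{n-2}$ subfan. No wall relations, no perturbation of $u$, no case distinction by dimension.

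In your argument the tightening does not close. There is first a sign slip: with your convention the collapse $\nu(\rho_c)+\nu(\rho_{c'})=2\nu(\rho_i)$ requires $c_i=-2$, not $+2$. More seriously, nothing controls $\langle\nu(\rho_{c'}),u\rangle$: you placed $\rho_c$ on the marked side of a transition in the link of $\rho_i$, but $\rho_{c'}$ sits across the wall $\tau$ and its pairing with $u$ can have either sign, so pairing the wall relation with $u$ does not force $c_i=-2$ even once the $c_e$ terms are gone. Finally, the proposed perturbation of $u$ is not available: $u$ must remain a lattice point with $\langle\nu(\rho_i),u\rangle=-1$, the $n-3$ extra conditions $\langle\nu(\rho_e),u\rangle=0$ need not be simultaneously solvable over $\mathbb Z$, and even if they were, there is no reason the perturbed $u$ still disconnects $\Gamma_i^\circ$.
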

	\begin{proof}
		Consider $u\in M$ and $1\leq i \leq l$ such that $\langle \nu(\rho_i),u\rangle=-1$ and take $\nu(\rho_j),\nu(\rho_k)\in\Gamma_i^\circ(u)$ with $j\neq k$. We shall show that $\nu(\rho_j)$ and $\nu(\rho_k)$ are in fact connected in $\Gamma_i(u)$; the claim then follows from corollary \ref{prop:t1} and lemma \ref{lemma:nfisgood}.

		Let $\gamma$ be the line segment connecting $\nu(\rho_j)$ and $\nu(\rho_k)$ and let $\widetilde{\gamma}$ be the projection of $\gamma$ to the roof of $\Sigma$. Now, since the roof of $\Sigma$ is concave,
		\begin{equation*}\langle v,u\rangle \leq \max \{\langle \nu(\rho_j),u\rangle,\langle \nu(\rho_k),u\rangle \}\leq -1\end{equation*}
			for all $v\in\widetilde{\gamma}$. Thus, if $\gamma$ doesn't intersect $\rho_i$, $\widetilde{\gamma}$ is in $U_i(u)$ and $\nu(\rho_j)$ and $\nu(\rho_k)$ are connected in $U_i(u)$ and thus also in $\Gamma_i(u)$. Suppose on the other hand that $\gamma$  intersects $\rho_i$, that is, that $\nu(\rho_i)\in\widetilde{\gamma}$. Then from the concavity of the roof and $\langle \nu(\rho_i),u\rangle=-1$, it follows that  $\langle \nu(\rho_j),u\rangle=\langle \nu(\rho_k),u\rangle=-1$. Since however both $\rho_j$ and $\rho_k$ share common cones with $\rho_i$, one easily sees that the subfan of $\Sigma$ with rays $\rho_i$, $\rho_j$, and $\rho_k$ corresponds to the toric variety $\widetilde{A}_1\times (\mathbb{C}^*)^{n-2}$, a contradiction.
	\end{proof}

Now, any weakly Fano smooth, complete toric surface which isn't Fano does in fact admit an embedding of $\widetilde{A}_1$, so the above result doesn't provided anything new for $n=2$. However, we can show that non-Fano surfaces are in fact never rigid:
\begin{cor}
A smooth, complete toric surface $Y$ is rigid if and only if $Y$ is Fano. 
\end{cor}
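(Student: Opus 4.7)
The plan is to prove the contrapositive of the ``only if'' direction: assuming $Y$ is not Fano, I will exhibit a nonzero element of $T_Y^1$. The converse (Fano implies rigid) is already a consequence of the preceding corollary, since a Fano toric surface has $-K_Y\cdot D_i>0$ for every boundary divisor, which by adjunction forces $D_i^2\geq -1$ and so precludes any $(-2)$-curve; in dimension two this is exactly the obstruction to an equivariant embedding of $\widetilde{A}_1$.

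First I would translate the non-Fano hypothesis into combinatorial data on the fan. For a smooth complete toric surface, consecutive primitive ray generators satisfy a wall relation
\[
  \nu(\rho_{i-1}) + \nu(\rho_{i+1}) = a_i\,\nu(\rho_i)
\]
with $a_i\in\mathbb{Z}$, and a standard toric intersection computation gives $D_i^2=-a_i$ and therefore $-K_Y\cdot D_i = 2-a_i$. Because ampleness on a complete toric surface is tested on the invariant curves $D_i$, the variety $Y$ is Fano precisely when $a_i\leq 1$ for every $i$. Failing to be Fano thus produces at least one index $i$ with $a_i\geq 2$.

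Fix such an $i$. Smoothness of the cone $\langle\rho_{i-1},\rho_i\rangle$ means that $\nu(\rho_{i-1}),\nu(\rho_i)$ form a $\mathbb{Z}$-basis of $N$, so I can choose $u\in M$ with $\langle\nu(\rho_{i-1}),u\rangle = \langle\nu(\rho_i),u\rangle = -1$. The wall relation then forces
\[
  \langle\nu(\rho_{i+1}),u\rangle = a_i\langle\nu(\rho_i),u\rangle - \langle\nu(\rho_{i-1}),u\rangle = 1 - a_i \leq -1,
\]
so both conditions in Corollary \ref{cor:t1forsurfaces} hold for $\rho_i$ at this weight, giving $\dim T_Y^1(u)\geq 1$. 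Hence $T_Y^1\neq 0$ and $Y$ is not rigid.

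There is no serious obstacle here beyond matching sign conventions in the wall relation. The conceptual point, and the only thing I need to verify carefully, is that the combinatorial inequality $a_i\geq 2$ which characterizes non-Fano surfaces is exactly what lets one put two consecutive rays $\rho_{i-1},\rho_i$ on the hyperplane $\langle\,\cdot\,,u\rangle=-1$ while keeping $\rho_{i+1}$ strictly on the negative side, which is precisely the combinatorial data that Corollary \ref{cor:t1forsurfaces} turns into a nontrivial infinitesimal deformation.
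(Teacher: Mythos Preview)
Your argument is correct, and it takes a genuinely different route from the paper. The paper argues by structure theory: every smooth complete toric surface arises from $\mathbb{P}^2$, $\mathbb{P}^1\times\mathbb{P}^1$, or some $\mathcal{F}_r$ by a sequence of equivariant blow-ups, the minimal models are handled directly via Corollary~\ref{cor:t1forsurfaces}, and then the monotonicity $T_Y^1\subset T_{\widetilde Y}^1$ under blow-up reduces everything to a finite case check which happens to coincide with the list of toric del Pezzos. Your proof is instead a single direct combinatorial step: non-Fano is exactly the condition that some wall coefficient satisfies $a_i\ge 2$, and this inequality is precisely what lets you place $\nu(\rho_{i-1})$ and $\nu(\rho_i)$ on the hyperplane $\langle\cdot,u\rangle=-1$ while keeping $\nu(\rho_{i+1})$ strictly negative, which feeds straight into Corollary~\ref{cor:t1forsurfaces}. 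The paper's approach has the virtue of illustrating the blow-up monotonicity corollary and making contact with the explicit classification; yours is cleaner, avoids any enumeration, and makes transparent the equivalence between ``there is a $(-a_i)$-curve with $a_i\ge 2$'' and ``there is a degree $u$ with $T_Y^1(u)\neq 0$.'' For the Fano $\Rightarrow$ rigid direction you are right that the preceding corollary applies (Fano forces $a_i\le 1$, hence no $\widetilde A_1$ configuration), though the paper implicitly also has the older reference \cite{MR1420707} available.
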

\begin{proof}
	Every smooth, complete toric surface $Y$ can be constructed as an equivariant blow-up of $\mathbb{P}^2$, $\mathbb{P}^1\times \mathbb{P}^1$, or the Hirzebruch surface $\mathcal{F}_r$, cf. \cite{MR1234037} page 43. Of these varieties, an application of corollary \ref{cor:t1forsurfaces} shows that only $\mathbb{P}^2$, $\mathbb{P}^1\times \mathbb{P}^1$, and $\mathcal{F}_1$ are rigid. One then checks case by case rigidity for blow-ups of these surfaces. Once a blow-up is no longer rigid, the above corollary tells us that further blow ups are also non-rigid; it thus turns out that we only have to check finitely many cases and that the rigid surfaces are exactly those which are Fano.
\end{proof}

\begin{ex}
	Consider rays $\rho_0\ldots\rho_4$ generated by $(0,0,-1)$, $(0,0,1)$, $(1,0,1)$, $(1,1,1)$, and  $(0,1,1)$, respectively. Let $\Sigma$ be the fan with top-dimensional cones spanned by $\rho_1,\rho_2,\rho_3$, $\rho_1,\rho_3,\rho_4$, $\rho_0,\rho_1,\rho_4$, and $\rho_0,\rho_i,\rho_{i+1}$ for $i=1,2,3$. Then the complete toric threefold $Y=\tv(\Sigma)$ is rigid. Indeed, although it isn't Fano, it is weakly Fano, and one easily checks that there is no equivariant embedding of $\widetilde{A}_1\times (\mathbb{C}^*)$.
\end{ex}

\section{Homogeneous One-Parameter Deformations}\label{sec:homdefs}
In \cite{MR1329519}, Altmann introduced so-called toric deformations of affine toric varieties to study the deformation theory of toric singularities. We wish to construct an analogon in the global setting.

Recall that a deformation $\pi:X\to S$ of an affine toric variety $Y$ is called toric if ${X}$ is a toric variety, and the natural inclusion $Y\hookrightarrow X$ is torus equivariant and induces an isomorphism on the closed orbits. Let $Y$ correspond to a cone $\sigma$ in $N_\mathbb{Q}$ and $X$ to a cone  $\widetilde\sigma$ in $\widetilde{N}_\mathbb{Q}$ with $\codim(Y,X)=1$. It turns out that the inclusion $Y\hookrightarrow X$ is given by a single binomial $\chi^{s^1}-\chi^{s^2}$, with $s^1,s^2\in\widetilde{\sigma}^\vee\cap\widetilde{M}$; let $R$ be the common image of $s^1,s^2$ in $\sigma^\vee\cap M\setminus 0$. Setting the parameter $t=\chi^{s^1}-\chi^{s^2}$ thus gives a natural map $X\to\mathbb{A}^1$, which is a $1$-parameter deformation of $Y$; we say that it has degree $-R$. This deformation is not necessarily equal to the deformation $\pi$, but they share certain properties.

Ideally, in the global setting we would like to consider deformations $\pi:X\to S$ of a complete, smooth toric surface $Y$ which are locally one-parameter toric deformations in a fixed degree $-R$.\footnote{For $Y$ non-complete and non-smooth, the  author employed exactly such a construction in \cite{ilten08} to describe certain simultaneous resolutions.} However, this is simply not possible, since we can never find an $R\neq0$ living in all $\sigma^\vee$, $\sigma\in\Sigma$. Instead, we will consider deformations which are locally one-parameter toric deformations of degree $-R$ only on those open sets $\tv(\sigma)$ with $R\in\sigma^\vee$. We first describe how to construct such deformations and then compute their images under the Kodaira-Spencer map.

\subsection{Construction}
Let $Y=\tv(\Sigma)$ be a smooth, complete toric surface and let $R\in M$ with $R$ primitive. We can choose a basis of $N$ such that $R=[0,1]$ in the corresponding basis of $M$. We then consider the polyhedral subdivision 
\begin{align*}
\Xi_0&=\Sigma\cap\{v \in N_\mathbb{Q}|\langle v,[0,1]\rangle=1\}.\\
\end{align*}
Let $\mathbb{L}=\{v \in N|\langle v,[0,1]\rangle=1\}$ be the lattice attained when considering the point $(0,1)\in N$ as the origin. We can thus consider $\Xi_0$ to be a polyhedral subdivision of $\mathbb{Q}$. Let $\Delta_0^0,\ldots,\Delta_0^{m+1}$ be the line segments in $\Xi_0$, ordered such that $\Delta_0^i\geq \Delta_0^{i+1}$. We choose our ordering of the rays $\rho_i\in\Sigma^{(1)}$ such that $\rho_1$ passes between $\Delta_0^0$ and $\Delta_0^1$.

\begin{defn}
A subdivision decomposition $(\widetilde{\Xi}_0,\widetilde{\Xi}_t)$ of $\Xi_0$ consists of the sets \\$\widetilde{\Xi}_0 = \{\widetilde{\Delta}_0^0,\ldots,\widetilde{\Delta}_0^{m+1}\}$ and $\widetilde{\Xi}_t=\{\widetilde{\Delta}_t^0,\ldots,\widetilde{\Delta}_t^{m+1}\}$ such that
\begin{enumerate}
\item $\widetilde{\Delta}_0^i$, $\widetilde{\Delta}_t^i$ are polytopes in $\mathbb{Q}$;
\item $\widetilde{\Delta}_0^i\geq\widetilde{\Delta}_0^{i+1}$, $\widetilde{\Delta}_t^i\geq\widetilde{\Delta}_t^{i+1}$ for all $i$ and both $\widetilde{\Xi}_0$ and $\widetilde{\Xi}_t$ cover $\mathbb{Q}$;
\item $\Delta_0^i=\widetilde{\Delta}_0^i+\widetilde{\Delta}_t^i$ for all $i$.
\end{enumerate} 
Furthermore, we say that $(\widetilde{\Xi}_0,\widetilde{\Xi}_t)$ is admissible if for each $1\leq i\leq m$ either $\widetilde{\Delta}_0^i$ or $\widetilde{\Delta}_t^i$ is a lattice point.
\end{defn}

\begin{remark}
	To each admissible subdivision decomposition $(\widetilde{\Xi}_0,\widetilde{\Xi}_t)$  we can associate the $m+2$-tuples $a=(a_0,a_1,\ldots,a_m,a_{m+1})$ and $\lambda=(\lambda_0,\ldots,\lambda_{m+1})$ with $a_i\in\{-1,1\}$ and $\lambda_i\in\mathbb{Z}$ as follows: For $0\leq i \leq m+1$ we choose $\lambda_i\in\mathbb{Z}$ such that ${\Delta}_0^i=\widetilde{\Delta}_0^i+\lambda_i$ if possible, in which case we set $a_i=1$, otherwise $a_i=-1$ and we choose $\lambda_i\in\mathbb{Z}$ such that ${\Delta}_0^i=\widetilde{\Delta}_t^i+\lambda_i$. Note that $\lambda_i=\lambda_{i+1}$ if $a_i=a_{i+1}$ and $\lambda_i+\lambda_{i+1}=\Delta_0^i\cap\Delta_0^{i+1}$ if $a_i\neq a_{i+1}$. 

Conversely, an integer $\lambda_0$ and a binary $m+2$-tuple $a$ define a subdivision decomposition of $\Xi_0$. Such a decomposition is admissible if and only if $\Delta_0^i\cap\Delta_0^{i+1}$ is a lattice point for every $i$ such that $a_i\neq a_{i+1}$. 
\end{remark}

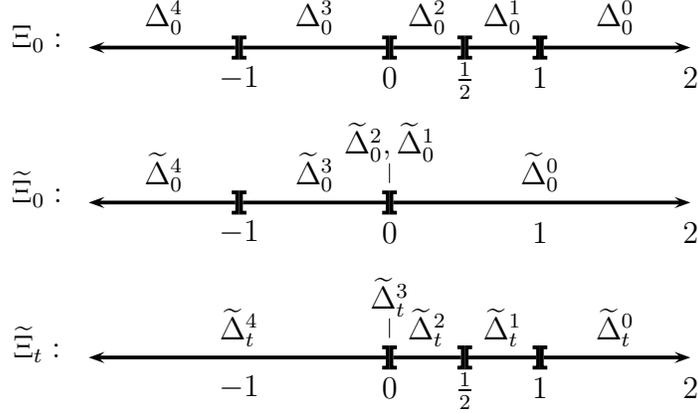
\begin{figure}
	\begin{center}
		\begin{displaymath}
		\begin{array}{c c}
\Xi_0:&
		\psset{unit=2cm,linewidth=.05cm}
		\begin{pspicture}(-2,0)(2,.7)
\psline{<-]}(-2,0)(-1,0)
\psline{[-]}(-1,0)(0,0)
\psline{[-]}(0,0)(.5,0)
\psline{[-]}(.5,0)(1,0)
\psline{[->}(1,0)(2,0)
\rput(0,-.2){$0$}
\rput(.5,-.22){$\frac{1}{2}$}
\rput(1,-.2){$1$}
\rput(2,-.2){$2$}
\rput(-1,-.2){$-1$}
\rput(1.5,.2){$\Delta_0^0$}
\rput(.75,.2){$\Delta_0^1$}
\rput(0.25,.2){$\Delta_0^2$}
\rput(-.5,.2){$\Delta_0^3$}
\rput(-1.5,.2){$\Delta_0^4$}
\end{pspicture} \\
\\
\widetilde{\Xi}_0:&
		\psset{unit=2cm,linewidth=.05cm}
		\begin{pspicture}(-2,0)(2,.7)
\psline{<-]}(-2,0)(-1,0)
\psline{[-]}(-1,0)(0,0)
\psline{[->}(0,0)(2,0)
\rput(0,-.2){$0$}
\rput(1,-.2){$1$}
\rput(2,-.2){$2$}
\rput(-1,-.2){$-1$}
\rput(1,.2){$\widetilde{\Delta}_0^0$}
\rput(-.5,.2){$\widetilde{\Delta}_0^3$}
\rput(-1.5,.2){$\widetilde{\Delta}_0^4$}
\rput(0,.4){$\widetilde{\Delta}_0^2,\widetilde{\Delta}_0^1$}
\psline[linewidth=.01cm](0,.26)(0,.13)
\end{pspicture} \\
\\
\widetilde{\Xi}_t:&
\psset{unit=2cm,linewidth=.05cm}
		\begin{pspicture}(-2,0)(2,.7)
\psline{<-]}(-2,0)(0,0)
\psline{[-]}(.5,0)(1,0)
\psline{[-]}(0,0)(.5,0)
\psline{[->}(1,0)(2,0)
\rput(0,-.2){$0$}
\rput(1,-.2){$1$}
\rput(2,-.2){$2$}
\rput(-1,-.2){$-1$}
\rput(.5,-.22){$\frac{1}{2}$}
\rput(1.5,.2){$\widetilde{\Delta}_t^0$}
\rput(.75,.2){$\widetilde{\Delta}_t^1$}
\rput(0.25,.2){$\widetilde{\Delta}_t^2$}
\psline[linewidth=.01cm](0,.26)(0,.13)
\rput(0,.4){$\widetilde{\Delta}_t^3$}
\rput(-1,.2){$\widetilde{\Delta}_t^4$}
\end{pspicture} \\
\end{array}
\end{displaymath}
	\end{center}
	\caption{A possible subdivision decomposition}\label{fig:ex1subdv}
\end{figure}

\begin{ex}
	We continue the example from figure \ref{fig:ex1fan} in section \ref{sec:es}. If we take the standard basis, then we attain the polyhedral subdivision induced by $\Sigma$ on the dashed gray line in figure \ref{fig:ex1fan}. We picture this subdivision $\Xi_0$  in figure \ref{fig:ex1subdv}. A possible subdivision decomposition is pictured as well; in this decomposition, $\widetilde{\Delta}_0^1$, $\widetilde{\Delta}_0^2$, and $\widetilde{\Delta}_t^3$ are all equal to the lattice point $0$. For this decomposition, we have $(a_0,\ldots,a_4)=(1,-1,-1,1,1)$ and $(\lambda_0,\ldots,\lambda_4)=(1,0,0,0,0)$. Note that modulo the choice of $\lambda_0$, there are exactly four possible decompositions, corresponding to the tuples $a=(1,1,1,1,1)$, $a=(1,-1,-1,-1,1)$, $a=(1,-1,-1,1,1)$, and $a=(1,1,1,-1,1)$. 
\end{ex}

From each admissible decomposition $(\widetilde{\Xi}_{0},\widetilde{\Xi}_{t})$ of $\Xi_0$ we will construct a one-parameter deformation of $Y$. We will do this locally on $\tv(\sigma)$ for each $\sigma\in\Sigma^{(2)}$ and then glue together to attain a global family. We first set up some notation. Let $\sigma_i$ be the cone in $\Sigma$ spanned by $\rho_i,\rho_{i+1}$. Note that for $1\leq i \leq m$, $\cone(\Delta_0^i)=\sigma_i$. For each $1\leq i\leq l$, let $w_i^1=[r_i^1,s_i^1],w_i^2=[r_i^2,s_i^2]$ be positively oriented minimal lattice generators of $\sigma_i^\vee$ and set $Z_{i,j}=x^{r_i^j}y^{s_i^j}$ for $j=1,2$. Then $\tv(\sigma_i)=\spec A_i$, where $$A_i=\mathbb {C}[\sigma_i^\vee\cap M]=\mathbb{C}[Z_{i,1},Z_{i,2}].$$ Note furthermore that $w_i^1=-w_{i+1}^2$.

Let $a$ and $\lambda$ be as in the above remark. For $i>m+1$ or $i<0$ we set $a_i=1$ and $\lambda_i=0$. Then for $0\leq i \leq l-1$ and $j\in\{1,2\}$ define polynomials
\begin{align*}
	\widetilde{Z}_{i,j}=\left\{\begin{array}{l l}
		x^{r_i^j}y^{s_i^j+\lambda_i r_i^j}(y-t)^{-\lambda_i r_i^j}\quad a_i=1\\
		x^{r_i^j}y^{-\lambda_i r_i^j}(y-t)^{s_i^j+\lambda_i r_i^j}\quad a_i=-1\end{array}\right.
\end{align*}
Likewise, define rings
\begin{align*}
	\widetilde{A}_i&=\mathbb{C}[t,\widetilde{Z}_{i,1},\widetilde{Z}_{i,2}] \qquad \textrm{if}\quad 0\leq i \leq m+1;\\
	\widetilde{A}_i&=\mathbb{C}[t,\widetilde{Z}_{i,1},\widetilde{Z}_{i,2},y(y-t)^{-1}] \qquad \textrm{if}\quad i> m+1.\\
\end{align*}
Then the natural map $\pi_i:\spec \widetilde{A}_i\to\spec \mathbb{C}[t]$ defines a one-parameter deformation of $\spec A_i=\tv(\sigma_i)$.
The maps $\pi_i$ glue naturally to give a map $\pi:X\to \mathbb{A}^1$. Indeed, for some $0\leq i,j< l$ identify $\spec \widetilde{A}_i$ with $\spec \widetilde{A}_j$ on all open subsets $\spec B$, where $B$ is a localization of both $\widetilde{A}_i$ and $\widetilde{A}_j$.
\begin{thm}
	The family $\pi:X\to \mathbb{A}^1$ is a one-parameter deformation of $Y=\tv(\Sigma)$.
\end{thm}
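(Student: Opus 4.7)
The plan is to establish the theorem by a local-to-global argument. First I would check that each affine piece $\pi_i \colon \spec \widetilde{A}_i \to \spec \mathbb{C}[t]$ is a flat deformation of $\spec A_i = \tv(\sigma_i)$, and then verify that these pieces glue consistently on the overlaps corresponding to shared rays, so that the global family $\pi \colon X \to \mathbb{A}^1$ inherits flatness and has central fibre $Y$.

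For the local picture, the first task is to identify the central fibre. A direct substitution $t = 0$ into the defining formulas gives $\widetilde{Z}_{i,j}|_{t=0} = x^{r_i^j} y^{s_i^j} = Z_{i,j}$ in both cases $a_i = \pm 1$, since the $y$- and $(y-t)$-exponents contribute offsetting powers of $y$; the auxiliary generator $y(y-t)^{-1}$ specialises to $1$. Hence $\widetilde{A}_i \otimes_{\mathbb{C}[t]} \mathbb{C} \cong A_i$. For flatness, smoothness of $Y$ gives $A_i = \mathbb{C}[Z_{i,1}, Z_{i,2}]$, and one verifies that $\widetilde{A}_i$ is a polynomial ring over $\mathbb{C}[t]$ freely generated by $\widetilde{Z}_{i,1}$ and $\widetilde{Z}_{i,2}$ (for $i \leq m+1$), which is manifestly free and hence flat. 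For $i > m+1$ the extra generator $y(y-t)^{-1}$ expresses the essential triviality of the family on that chart; one can show $\widetilde{A}_i$ is still free over $\mathbb{C}[t]$ after a change of variables, or equivalently check that $t$ is a non-zerodivisor.

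The gluing step requires $\mathbb{C}[t]$-algebra isomorphisms $(\widetilde{A}_i)_{\widetilde{Z}_{i,1}} \cong (\widetilde{A}_{i+1})_{\widetilde{Z}_{i+1,2}}$ on the opens corresponding to the shared ray $\rho_{i+1}$. Using $w_i^1 = -w_{i+1}^2$ together with the relation between $(a_i, \lambda_i)$ and $(a_{i+1}, \lambda_{i+1})$ recorded in the remark, a direct computation yields: when $a_i = a_{i+1}$ (so $\lambda_i = \lambda_{i+1}$), $\widetilde{Z}_{i,1} \cdot \widetilde{Z}_{i+1,2} = 1$, forcing the identification; when $a_i \neq a_{i+1}$, admissibility ensures the breaking point $\lambda_i + \lambda_{i+1} = \Delta_0^i \cap \Delta_0^{i+1}$ is a lattice integer, and the product becomes a power of $y/(y-t)$, which is a unit on the overlap. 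The transition between the regimes $i \leq m+1$ and $i > m+1$ is exactly why the extra generator $y(y-t)^{-1}$ is included in $\widetilde{A}_i$ for $i > m+1$: it ensures the required unit is already present, making the identification with the neighbouring chart a morphism of $\mathbb{C}[t]$-algebras rather than a rational map.

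The main obstacle, I expect, is the bookkeeping in this last step, namely tracking the four combinations of $(a_i, a_{i+1})$ together with the bounded versus unbounded distinction simultaneously, and confirming that the full cycle of pairwise gluings around $\Sigma$ closes up. Triple overlaps involve only the big torus, so cocycle compatibility is automatic, and separatedness of $X$ follows from the standard toric argument. Once these pieces are in place, flatness and the central-fibre identification $\pi^{-1}(0) = Y$ are inherited globally, yielding the asserted one-parameter deformation.
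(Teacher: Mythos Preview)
Your approach is essentially the paper's: check the local pieces and then verify that the gluings along the shared rays lift, with special treatment at the two ``boundary'' rays $\rho_0$ and $\rho_{m+2}$ where the extra generator $y(y-t)^{-1}$ is needed. The paper is terser---it omits the flatness discussion entirely and frames the argument purely as showing that the given gluing of the $\spec \widetilde A_i$ restricts on the special fibre to the toric gluing of the $\spec A_i$ in $\tv(\Sigma)$---but the substance is the same.

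One point to sharpen. In the interior case $a_i\neq a_{i+1}$ you say the product $\widetilde Z_{i,1}\cdot \widetilde Z_{i+1,2}$ is a power of $y/(y-t)$ and then appeal to this being a unit on the overlap. If you actually compute the exponent using $\langle \nu(\rho_{i+1}),w_i^1\rangle=0$ together with $\lambda_i+\lambda_{i+1}=\Delta_0^i\cap\Delta_0^{i+1}$, you find it is zero, so the product is exactly $1$; this is what the paper asserts directly as $\widetilde Z_{i-1,1}=\widetilde Z_{i,2}^{-1}$ for $i\neq 0,m+2$. This matters because on those interior overlaps $y$ is not invertible (the ray $\rho_{i+1}$ pairs positively with $R=[0,1]$), so $y/(y-t)$ is not a priori a unit there and you should not lean on that. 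The place where a genuine power of $y(y-t)^{-1}$ must be absorbed is precisely at $i=m+2$ and $i=0$, which is why the paper localizes further at $y^{-1}(y-t)$ there---your explanation of that step is correct.
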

	\begin{proof}
		We just need to show that $\pi^{-1}(0)=\tv(\Sigma)$; this amounts to showing that the gluing of the $\spec \widetilde{A}_i$ restricted to the special fiber is equivalent to the gluing of the $\spec A_i$ present in $\tv(\Sigma)$. Now, it is immediate that any one of the gluings of the $\spec \widetilde{A}_i$ restricted to the special fiber induces a gluing on the $\spec A_i$ already present in $\tv(\Sigma)$. Thus, we just need to show that any gluing between the $A_i$ lifts to a gluing between the $\widetilde{A}_i$.

	Each ray $\rho_i$ corresponds to the gluing induced by the diagram
	\begin{equation*}
		A_{i-1}\to ({A_{i-1}})_{Z_{i-1,1}}=({A_{i}})_{Z_{i,2}}  \leftarrow A_{i}.
	\end{equation*}
	For $i\neq 0,m+2$ this gluing can be lifted to that induced by the diagram 
	\begin{equation*}
		\widetilde{A}_{i-1}\to ({\widetilde{A}_{i-1}})_{\widetilde{Z}_{i-1,1}}=({\widetilde{A}_{i}})_{\widetilde{Z}_{i,2}}  \leftarrow \widetilde{A}_{i}.
	\end{equation*}
	Indeed, the equality in the middle follows from the fact that $\widetilde{Z}_{i-1,1}=\widetilde{Z}_{i,2}^{-1}$ for $i\neq 0,m+2$.

On the other hand, for $i=m+2,l$ this gluing can be lifted to that induced by the diagram 
\begin{equation*}
	\widetilde{A}_{i-1}\to ({\widetilde{A}_{i-1}})_{\widetilde{Z}_{i-1,1},y^{-1}(y-t)}=({\widetilde{A}_{i}})_{\widetilde{Z}_{i,2},y^{-1}(y-t)}  \leftarrow \widetilde{A}_{i};
	\end{equation*}
The equality in the middle is easily checked. All other gluings in $\tv(\Sigma)$ are induced by the above.
\end{proof}

\begin{remark}
Let $\Sigma_+=\{\sigma\in\Sigma | R\in(\sigma^\vee)^\circ\}$. The deformation $\pi$ restricted to the open subset $\tv(\Sigma_+)\subset Y$ can then be completely described by a fan in a three-dimensional lattice $\widetilde{N}=\mathbb{Z}^3$. Indeed, for $1\leq i\leq m$ set $$\widetilde{\sigma}_i=\conv\{(\widetilde{\Delta}_0^i,1,0),(\widetilde{\Delta}_t^i,0,1)\}\subset\widetilde{N}_\mathbb{Q}.$$ The cones $\widetilde{\sigma}_i$ fit together to induce a fan $\widetilde{\Sigma}_+$ in $N_\mathbb{Q}$. Let $\pi_+:\tv(\widetilde{\Sigma}_+)\to\mathbb{A}^1$ be the map given by $t=\chi^{[0,1,0]}-\chi^{[0,0,1]}$ with $t$ the parameter of $\mathbb{A}^1$. Then $\pi_+$ is a deformation of $\tv(\Sigma_+)$ which is a toric deformation on each $\tv(\sigma_i)$; this follows from the local description in \cite{MR1329519}. Furthermore, calculation shows that it is simply the restriction of $\pi$ to $\tv(\Sigma_+)$. Note that the inclusion of the special fiber $\tv(\Sigma_+)\hookrightarrow\tv(\widetilde{\Sigma}_+)$ is induced by the unique map $N\to \widetilde{N}$ sending $v\in\mathbb{L}$ to $(v,1,1)$. 
\end{remark}

\begin{remark}
The construction of the deformation $\pi:X\to\mathbb{A}^1$ may appear somewhat technical; the same deformation can be described quite elegantly using the theory of T-varieties \cite{MR2426131}. Indeed, if we set $\Xi_\infty=\Sigma\cap\{v \in N_\mathbb{Q}|\langle v,[0,1]\rangle=-1\}$ (with proper labeling) and let $\Xi=\Xi_0\otimes \{0\}+\Xi_\infty\otimes\{\infty\}$ be a divisorial fan on $\mathbb{P}^1$, $Y=\tv(\Sigma)$ is equal to the T-variety associated to $\Xi$. Now, consider the divisors $D_0=V(y)$, $D_t=V(y-t)$, and $D_\infty=V(y^{-1})$ on $\mathbb{P}^1\times\mathbb{A}^1$ with $t$ the natural $\mathbb{A}^1$ coordinate and $y$ the natural $\mathbb{P}^1$ coordinate. The divisorial fan $\widetilde{\Xi}=\widetilde{\Xi}_0\otimes D_{0}+\widetilde{\Xi}_t\otimes D_{t}+\widetilde{\Xi}_\infty\otimes D_{\infty}$ corresponds to a T-variety equal to the total space $X$. The map $\pi$ corresponds to the projection onto the $\mathbb{A}^1$ factor. 

The local theory of deformations of T-varieties is being developed by Robert Vollmert in \cite{vollmert09}. Hendrik Süß has also constructed an example of a smoothing of a Fano threefold using the language of T-varieties \cite{suess08}. The author of the present paper plans an upcoming paper outlining the general theory of deformations of (not necessarily smooth) complete T-varieties.

The above description of the deformation $\pi$ has the further advantage that the fibers $\pi^{-1}(t)$ for $t\neq 0$ can be identified with the T-variety over $\mathbb{P}^1$ coming from the divisorial fan with coefficients $\widetilde{\Xi}_0$ at $0$, $\widetilde{\Xi}_t$ at $t$, and $\widetilde{\Xi}_\infty$ at $\infty$.
\end{remark}

\subsection{The Kodaira-Spencer Map}
Let $\pi:X\to \mathbb{A}^1$ be the deformation coming from the admissible decomposition $(\widetilde{\Xi}_0,\widetilde{\Xi}_t)$ or equivalently the integer $\lambda_0$ and admissible binary $m+2$-tuple $a$. As above, For $l>i>m+1$ we set $a_i=1$ and $\lambda_i=0$.
Note that when we compute the image of the Kodaira-Spencer map, we will be expressing cohomology classes as \v{C}ech cocycles with respect to the natural affine invariant open covering $\mathfrak{U}=\left\{\tv(\tau) | \tau\in\Sigma^{(2)}\right\}$; to describe such a cocycle, it will be in fact sufficient to give sections $d_{i-1,i}\in\Gamma(TV(\rho_i),\T_Y)$ for $1\leq i \leq l$.

\begin{thm}
The image of $\pi$ in $H^1(Y,\T_Y)$ can be described as the \v{C}ech cocycle induced by $d_{i-1,i}\in\Gamma(\tv(\rho_i),\T_Y)$, where
\begin{equation*}
d_{i-1,i}=\left\{\begin{array}{l l}
0 & i\notin\{m+2,0\} \ \mathrm{and}\ a_i=a_{i-1}\\
{a_{i-1}}(\lambda_{i-1}-\lambda_i)xy^{-1}\frac{\partial}{\partial x} & i\in\{m+2,0\}\ \mathrm{and}\ a_i=a_{i-1}.\\
{a_{i-1}}( (\lambda_i+\lambda_{i-1}) xy^{-1}\frac{\partial}{\partial x}+\frac{\partial}{\partial y})& a_i\neq a_{i-1}\\
\end{array}\right.
\end{equation*}
In particular, the deformation is homogeneous of degree $[0,-1]=-R$. 
\end{thm}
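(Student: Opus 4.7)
The plan is to compute the Kodaira-Spencer class directly from the gluing data by standard \v{C}ech machinery. For each $\sigma_i \in \Sigma^{(2)}$ (so that $\tv(\sigma_i) \cong \mathbb{A}^2$), we define a local trivialization $\phi_i : A_i[t]/(t^2) \to \widetilde{A}_i/(t^2)$ by $Z_{i,j} \mapsto \widetilde{Z}_{i,j}$ and $t \mapsto t$; this is a well-defined ring isomorphism since $\widetilde{A}_i$ is a polynomial ring in the $\widetilde{Z}_{i,j}$ and $t$. The Kodaira-Spencer cocycle on the overlap $\tv(\rho_i) = \tv(\sigma_{i-1}) \cap \tv(\sigma_i)$ is then the derivation $d_{i-1,i} \in \Gamma(\tv(\rho_i), \T_Y)$ uniquely characterized by $\phi_i^{-1} \circ \phi_{i-1} \equiv \id + t \cdot d_{i-1,i} \pmod{t^2}$.

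The core computation compares $\widetilde{Z}_{i,2}$ with $\widetilde{Z}_{i-1,1}^{-1}$ on the overlap via the gluing isomorphisms established in the construction. Using the relation $w_{i-1}^1 = -w_i^2$ (equivalently, $r_{i-1}^1 = -r_i^2$ and $s_{i-1}^1 = -s_i^2$), a direct substitution into the definitions of $\widetilde{Z}_{i,j}$ followed by Taylor expansion of the resulting power of $(1 \pm t/y)$ yields
\[
\widetilde{Z}_{i-1,1}^{-1} \equiv \widetilde{Z}_{i,2}\bigl(1 + \alpha\, t/y\bigr) \pmod{t^2},
\]
where $\alpha = a_{i-1}(\lambda_{i-1} - \lambda_i) r_i^2$ if $a_{i-1} = a_i$ and $\alpha = a_{i-1}\bigl( s_i^2 + (\lambda_{i-1} + \lambda_i) r_i^2 \bigr)$ if $a_{i-1} \neq a_i$. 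Consequently $\phi_i^{-1}\phi_{i-1}(Z_{i,2}) \equiv Z_{i,2}(1 + \alpha t/y) \pmod{t^2}$, so $d_{i-1,i}(Z_{i,2}) = \alpha \cdot Z_{i,2}/y$.

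To match $d_{i-1,i}$ with the claimed vector field, we evaluate the candidate derivations on $Z_{i,2} = x^{r_i^2} y^{s_i^2}$. When $a_{i-1} = a_i$ and $i \notin \{0, m+2\}$, the remark forces $\lambda_{i-1} = \lambda_i$, so $\alpha = 0$ and $d_{i-1,i} = 0$. When $a_{i-1} = a_i$ and $i \in \{0, m+2\}$, applying $a_{i-1}(\lambda_{i-1} - \lambda_i) xy^{-1}\frac{\partial}{\partial x}$ to $Z_{i,2}$ gives $a_{i-1}(\lambda_{i-1} - \lambda_i) r_i^2 \cdot Z_{i,2}/y = \alpha Z_{i,2}/y$. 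When $a_{i-1} \neq a_i$, applying $a_{i-1}\bigl( (\lambda_i + \lambda_{i-1}) xy^{-1}\frac{\partial}{\partial x} + \frac{\partial}{\partial y} \bigr)$ to $Z_{i,2}$ yields $a_{i-1}\bigl( (\lambda_i + \lambda_{i-1}) r_i^2 + s_i^2 \bigr) Z_{i,2}/y = \alpha Z_{i,2}/y$. A parallel check on $Z_{i-1,1} = Z_{i,2}^{-1}$ via the Leibniz rule then pins down $d_{i-1,i}$ uniquely.

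The main obstacle is bookkeeping for the boundary rays $i \in \{0, m+2\}$: the gluing diagrams there involve the additional localization at $y^{-1}(y-t)$, but a direct verification shows this does not affect the ratio $\widetilde{Z}_{i-1,1}^{-1}/\widetilde{Z}_{i,2}$. One must also check regularity, since the candidate vector fields are \emph{a priori} only defined on the torus orbit and must extend to $\tv(\rho_i)$. For $i \in \{0, m+2\}$ this is automatic because $\langle \nu(\rho_i), R \rangle = 0$ makes $xy^{-1}\frac{\partial}{\partial x}$ regular on $\tv(\rho_i)$; for $a_{i-1} \neq a_i$, the admissibility condition $\lambda_{i-1} + \lambda_i = \Delta_0^{i-1} \cap \Delta_0^i$ guarantees that the combination $(\lambda_i + \lambda_{i-1})xy^{-1}\frac{\partial}{\partial x} + \frac{\partial}{\partial y}$ extends across $\rho_i$. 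Homogeneity of degree $-R = [0,-1]$ is then immediate, since both $xy^{-1}\frac{\partial}{\partial x}$ and $\frac{\partial}{\partial y}$ carry $M$-weight $[0,-1]$, placing the entire cocycle in $T_Y^1(-R)$.
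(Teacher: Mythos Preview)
Your approach is exactly the paper's approach---define local trivializations sending $Z_{i,k}$ to $\widetilde{Z}_{i,k}$, compare on overlaps, and read off the derivation from the first-order term in $t$---but your execution has a genuine gap at the step where you claim to have ``pinned down $d_{i-1,i}$ uniquely.''

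The coordinate ring of the overlap $\tv(\rho_i)$ is $(A_i)_{Z_{i,2}}=\mathbb{C}[Z_{i,1},Z_{i,2}^{\pm 1}]$, so a derivation on it is determined by its values on \emph{two} independent generators, say $Z_{i,1}$ and $Z_{i,2}$. You have computed $d_{i-1,i}(Z_{i,2})=\alpha\,Z_{i,2}/y$, and then you assert that ``a parallel check on $Z_{i-1,1}=Z_{i,2}^{-1}$ via the Leibniz rule'' finishes the identification. But $Z_{i-1,1}=Z_{i,2}^{-1}$ is not independent of $Z_{i,2}$: the Leibniz rule gives $d(Z_{i,2}^{-1})=-Z_{i,2}^{-2}\,d(Z_{i,2})$ automatically, so this second check is redundant. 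Concretely, any homogeneous degree $-R$ derivation has the form $c_1\,xy^{-1}\partial_x+c_2\,\partial_y$, and evaluating on $Z_{i,2}=x^{r_i^2}y^{s_i^2}$ yields only the single linear relation $c_1 r_i^2+c_2 s_i^2=\alpha$, which does not determine $(c_1,c_2)$.

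What is missing is the action of $d_{i-1,i}$ on a second generator, e.g.\ $Z_{i-1,2}$ (equivalently $Z_{i,1}$). The paper handles this by expressing $\widetilde{Z}_{i-1,k}$ for \emph{both} $k=1,2$ in terms of $\widetilde{Z}_{i,1},\widetilde{Z}_{i,2}$ via the change of basis $w_{i-1}^k=b_1^k w_i^1+b_2^k w_i^2$, obtaining the uniform formula
\[
d_{i-1,i}(Z_{i-1,k})=\Bigl(\tfrac{1}{2}(a_{i-1}-a_i)s_{i-1}^k+(a_{i-1}\lambda_{i-1}-a_i\lambda_i)r_{i-1}^k\Bigr)\,y^{-1}Z_{i-1,k},
\]
from which one reads off $d_{i-1,i}=\tfrac{1}{2}(a_{i-1}-a_i)\partial_y+(a_{i-1}\lambda_{i-1}-a_i\lambda_i)\,xy^{-1}\partial_x$. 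Your computation recovers exactly the $k=1$ case of this (via $w_{i-1}^1=-w_i^2$); you just need to run the same substitution for $k=2$ as well.
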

\begin{proof}
	We proceed to compute the image of $\pi$ under the Kodaira-Spencer map as described in \cite{MR2247603}. We shall thus be considering $\pi$ as a deformation over $\spec \mathbb{C}[t]/t^2$ instead of over $\spec \mathbb{C}[t]$.
We first claim that $\pi$ is trivial on the sets $\tv(\sigma_i)$ for $\sigma_i\in\Sigma^{(2)}$, that is, there are isomorphisms 
\begin{equation*}
	\theta_i^{\#}:\widetilde{A}_i\otimes \mathbb{C}[t]/t^2\to A_i \otimes \mathbb{C}[t]/t^2
\end{equation*}
respecting the $\mathbb{C}[t]/t^2$ structure. Indeed, define $\theta_i^{\#}$ by 
\begin{equation*}
	\theta_i^{\#}:\widetilde{Z}_{i,k}\mapsto Z_{i,k}
\end{equation*}
for $k=1,2$. This uniquely defines $\theta_i^{\#}$ since for $m+1<i<0$, $y(y-t)^{-1}\in \mathbb{C}[t,\widetilde{Z}_{i,1},\widetilde{Z}_{i,2}]/t^2$. Indeed, $y(y-t)^{-1}=1+ty^{-1}$ modulo $t^2$. One easily checks that  $\theta_i^{\#}$ is an isomorphism.

Now we set $\theta_{i,j}^{\#}:=\theta_j^{\#}(\theta_{i}^{\#})^{-1}$; this corresponds to an automorphism $$\theta_{i,j}\in\aut\left(\tv(\sigma_i\cap\sigma_j)\times\spec \mathbb{C}[t]/t^2\right).$$ To compute the image of the Kodaira-Spencer map, it will be sufficient to only consider the automorphisms $\theta_{i-1,i}$ that is, those corresponding to $\theta_{i-1, i}{^\#}$. We actually want to calculate for each $i$ the derivation $d_{i-1,i}$ defined by
\begin{equation*}
\theta_{i-1, i}^\#=\id+t\cdot d_{i-1,i}.
\end{equation*}

It's time to start calculating. We first note that $$\frac{y-t}{y}=\left(1-{a_i}\widetilde{Z}_{i,1}^{r_i^2}\widetilde{Z}_{i,2}^{-r_i^1}t\right)^{{a_i}}.$$ Now, let $b_1^k,b_2^k$ be such that 
$w_{i-1}^k=b_1^k w_i^1+b_2^k w_i^2$. We then have
\begin{align*}
	\theta_{i-1, i}^\#(Z_{i-1,k})&=\theta_i^\#\left(\widetilde{Z}_{i-1,k}\right)=\theta_i^\#\left(\widetilde{Z}_{i,1}^{b_1^k}\widetilde{Z}_{i,2}^{b_2^k}\cdot
\left(\frac{y-t}{y}\right)^{\frac{1}{2}(a_i-a_{i-1})s_{i-1}^k+(a_i\lambda_i-a_{i-1}\lambda_{i-1})r_{i-1}^k}\right)\\
&={Z}_{i-1,k}\cdot
\left(1-a_iy^{-1}t\right)^{\frac{1}{2}a_i(a_i-a_{i-1})s_{i-1}^k+a_i(a_i\lambda_i-a_{i-1}\lambda_{i-1})r_{i-1}^k}
\end{align*}
Thus, we have that
\begin{align*}
d_{i-1,i}(Z_{i-1,k})=-\frac{1}{2}(a_i-a_{i-1})s_{i-1}^k\cdot y^{-1}Z_{i-1,k}-(a_i\lambda_i-a_{i-1}\lambda_{i-1})r_{i-1}^k\cdot y^{-1}Z_{i-1,k},
\end{align*}
that is,
\begin{align*}
d_{i-1,i}=\frac{1}{2}(a_{i-1}-a_i)\frac{\partial}{\partial y}+(a_{i-1}\lambda_{i-1}-a_i\lambda_i)xy^{-1}\frac{\partial}{\partial x}.
\end{align*}
The expressions for $d_{i-1,i}$ in the statement of the theorem then follow from the facts that $a_{i-1},a_i\in\{-1,1\}$ and that for $i\notin\{0,m-2\}$, the equality $a_i=a_{i-1}$ implies that $\lambda_{i-1}=\lambda_i$.
\end{proof}

\begin{remark}
The cocycle condition tells us that we should have $\sum_{i=1}^l d_{i-1,i}=0$. This amounts to the equalities
\begin{align*}
\sum_{i=1}^l (a_{i-1}-a_i)&=0\\
\sum_{i=1}^l (a_{i-1}\lambda_{i-1}-a_i\lambda_i)&=0
\end{align*}
which are easily seen to hold.
\end{remark}

 The following corollary tells us how the image of the Kodaira-Spencer map looks after applying the isomorphism from the Euler sequence.

\begin{cor}
	The image of $\pi$ in $\bigoplus_{i=1}^l H^1(Y,\CO(D_i))$ under the isomorphism\\ $\bigoplus H^1(Y,\CO(D_i))\cong H^1(Y,\T_Y)$ can be described as the \v{C}ech cocycle induced by $$g_{j-1,j}\in\bigoplus_{i=1}^l \Gamma(\tv(\rho_j),\CO(D_i))\cdot e_{D_i},$$ where:

\begin{align*}
g_{m+1,m+2}&=\sum_{\substack{1\leq j\leq m+1\\ a_j\neq a_{j+1} } } -({a_{j-1}} y^{-1})\cdot e_{D_j};\\
	g_{j-1,j}&= ({a_{j-1}} y^{-1})\cdot e_{D_j}\quad \mathrm{for}\ j\notin\{m+2,0\}\ \mathrm{and}\ a_i\neq a_{i-1};\\
g_{j-1,j}&=0\quad \mathrm{otherwise}.
\end{align*}
\end{cor}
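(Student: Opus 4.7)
The plan is to construct the \v{C}ech lift by locally inverting the Euler map. Since $Y$ is smooth complete toric, $H^i(Y,\CO_Y) = 0$ for $i \geq 1$, so the generalized Euler sequence induces the isomorphism $\bigoplus_i H^1(Y, \CO(D_i)) \xrightarrow{\sim} H^1(Y, \T_Y)$ used in Lemma \ref{lemma:t1t2es}. Concretely, a representative for the preimage of the cocycle $\{d_{j-1,j}\}$ is produced by choosing, on each chart $\tv(\rho_j)$, a section $g_{j-1,j} \in \Gamma(\tv(\rho_j), \bigoplus_i \CO(D_i))$ mapping to $d_{j-1,j}$ under the Euler surjection; any two such local lifts differ by a section of the kernel $N_1(Y) \otimes \CO_Y$ and so represent the same class in $\bigoplus H^1(\CO(D_i))$.

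I would first make the Euler map explicit: a homogeneous section $\chi^u \cdot e_{D_i}$ is sent to the toric derivation $\chi^u \otimes \nu(\rho_i) = \chi^u \cdot (p_i \, x \, \partial/\partial x + q_i \, y \, \partial/\partial y)$ when $\nu(\rho_i) = (p_i, q_i)$ in our chosen coordinates (recall $R = [0,1]$). In particular, for a ``top'' ray $\rho_j$ (so $q_j = 1$), the section $y^{-1} e_{D_j}$ maps to $p_j \, xy^{-1} \, \partial/\partial x + \partial/\partial y$. For each $j \notin \{0, m+2\}$ with $a_j \neq a_{j-1}$, the remark following the definition of a subdivision decomposition identifies the common vertex $\Delta_0^{j-1} \cap \Delta_0^j$ with $\nu(\rho_j)$, giving $p_j = \lambda_{j-1} + \lambda_j$. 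Substituting, the candidate $g_{j-1,j} = a_{j-1} y^{-1} e_{D_j}$ maps to $a_{j-1}\bigl((\lambda_{j-1} + \lambda_j) \, xy^{-1} \, \partial/\partial x + \partial/\partial y\bigr) = d_{j-1,j}$. When $a_j = a_{j-1}$ at a generic $j$, both the lift and $d_{j-1,j}$ vanish, so there is nothing to verify.

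The principal obstacle is the exceptional index $j = m+2$, where $\rho_{m+2}$ is no longer a top ray and $d_{m+1,m+2}$ lies purely in $\partial/\partial x$: no single term $y^{-1} e_{D_i}$ can provide a lift, since its image carries a nonzero $\partial/\partial y$-component. Since $y^{-1}$ nevertheless remains regular on $\tv(\rho_{m+2})$ (as $\langle \nu(\rho_{m+2}), R \rangle < 0$), one may instead sum over all top rays where the tuple $a$ flips; the $\partial/\partial y$-contributions telescope to zero because consecutive flips carry opposite signs $\pm a_{j-1}$, while the $\partial/\partial x$-coefficients collapse, via $p_j = \lambda_{j-1} + \lambda_j$ applied at each flip, to the correct multiple of $xy^{-1} \, \partial/\partial x$ dictated by the preceding theorem. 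The case $j = 0$ is symmetric, and the global \v{C}ech cocycle condition for $\{g_{j-1,j}\}$ follows from the same telescoping identities among the $a_i$ and $\lambda_i$ already noted in the remark after the preceding theorem. The only genuinely delicate piece is the sign and index bookkeeping at $j = m+2$; the rest is a direct substitution.
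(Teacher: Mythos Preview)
Your strategy---make the Euler map explicit and verify that each $g_{j-1,j}$ maps to $d_{j-1,j}$---is exactly what the paper does for $j\notin\{m+2,0\}$, and your identification $p_j=\lambda_{j-1}+\lambda_j$ at flip indices is correct. The gap is at the two boundary indices.

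First, at $j=m+2$ the $\partial/\partial y$--contributions do \emph{not} telescope to zero in general. Summing $-a_{j-1}$ over flip indices $j$ and using $a_{j-1}=\tfrac12(a_{j-1}-a_j)$ at a flip, one gets $\tfrac12(a_{m+1}-a_0)$, which vanishes only when $a_0=a_{m+1}$. So the Euler image of $g_{m+1,m+2}$ need not equal $d_{m+1,m+2}$.

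Second, and more seriously, the statement sets $g_{l-1,0}=0$ (it falls under ``otherwise''), yet $d_{l-1,0}$ is typically nonzero (for instance whenever $\lambda_0\neq 0$, or whenever $a_0=-1$ and $l>m+2$). So ``the case $j=0$ is symmetric'' does not work: there is nothing symmetric to write down, and the componentwise equality simply fails there. Your remark that two local lifts differ by a section of the kernel $N_1(Y)\otimes\CO_Y$ is true, but the given $g_{l-1,0}=0$ is not a local lift of $d_{l-1,0}$ at all, so that freedom does not help.

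The paper repairs this by distinguishing two cases. If $m+2=l$ there is only one boundary index, and since both $\{g_{j-1,j}\}$ and $\{d_{j-1,j}\}$ are cocycles that agree for all $j\neq m+2$, the cocycle identity forces agreement at $j=m+2$. If $l>m+2$, the paper writes down an explicit $0$--cochain $f=\{f_i\}$ in $\bigoplus\CO(D_k)$, supported on $m+2\le i\le l-1$, whose coboundary is concentrated at $j=m+2$ and $j=l$; one then chooses its coefficients so that $g+\partial f$ is a genuine componentwise lift of $d$. Since $\partial f$ is a coboundary, $g$ still represents the correct cohomology class. That coboundary correction is the missing ingredient in your argument.
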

\begin{proof}
We first recall that the map $\CO(D_i)\to \T_Y$ is locally given by
	\begin{equation*}
				\chi^u \mapsto \left(\nu(\rho_i)_1 x \frac{\partial}{\partial x}+\nu(\rho_i)_2 y \frac{\partial}{\partial y}\right)\cdot \chi^u
			\end{equation*}
			for $u\in M$.
			Suppose now that $m+2=l$. In this case, one easily checks for $j\neq m+2$ that the image of $g_{j-1,j}$ is exactly $d_{j-1,j}$ as in the above theorem. It follows that the image of $g_{m+1,m+2}$ is $d_{m+1,m+2}$ as above. Indeed, $g_{m+1,m+2}=-\sum_{j\neq m+2} g_{j-1,j}$ and $d_{m+1,m+2}=-\sum_{j\neq m+2} d_{j-1,j}$. 

			Suppose instead that $l>m+2$. For $m+2\leq i \leq l-1$ define $f_i=\sum_k\alpha_k y^{-1} e_{D_k}$, and for other $i$ set $f_i=0$. For each $i$ we have $f_i\in\Gamma (\tv(\sigma_i), \bigoplus \CO(D_j))$ and thus this defines a homogeneous degree $-R$ element in the zeroth term of the \v{C}ech complex for $\bigoplus \CO(D_j)$. The image of $f$ in the first term of the \v{C}ech complex is the cocycle induced by $f_{m+1,m+2}= \sum_k \alpha_k y^{-1} e_{D_k}$, $f_{l-1,l}=-f_{m+1,m+2}$. Now, it is not difficult to choose numbers $\alpha_k$ such that the image of $g_{l-1,l}+f_{l-1,l}$ is $d_{l-1,l}$ as above. Similar to the previous paragraph it then also follows that the image of $g_{m+1,m+2}+f_{m+1,m+2}$ is $d_{m+1,m+2}$. Since however the cocycle induced by the $f_{j-1,j}$ lies in the image of the first term of the \v{C}ech complex, it is cohomologous to $0$.
\end{proof}

Fix now some $R\in M$ primitive. For $2\leq i\leq m$ with $\langle \nu(\rho_i),R\rangle=1$, let $\pi(i)$ be the deformation corresponding to $\lambda_0=0$ and $a_j=1$ for $j<i$ and $a_j=-1$ for $j\geq i$.

\begin{cor}
	The deformations $\{\pi(i)\}_{\substack { 2\leq i \leq m\\ \langle \nu(\rho_i),R\rangle=1 }}$ form a basis of $T_Y^1(-R)$.
\end{cor}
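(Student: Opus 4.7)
\emph{Dimension count.} Applying Corollary~\ref{cor:t1forsurfaces} with $u=-R$, a ray $\rho_i$ contributes to $T_Y^1(-R)$ exactly when $\langle\nu(\rho_i),R\rangle=1$ and $\langle\nu(\rho_{i\pm 1}),R\rangle>0$. The rays pairing strictly positively with $R$ are precisely $\rho_1,\ldots,\rho_{m+1}$, while the extreme rays $\rho_1$ and $\rho_{m+1}$ each have a neighbor ($\rho_l$, respectively $\rho_{m+2}$) outside this set. Hence the contributing indices are exactly $\{i:2\le i\le m,\ \langle\nu(\rho_i),R\rangle=1\}$, which matches the index set of the $\pi(i)$. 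It therefore suffices to prove linear independence.

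\emph{Triangular image matrix.} By Lemma~\ref{lemma:t1t2es} and Proposition~\ref{prop:t1} I identify
\[
T_Y^1(-R)=\bigoplus_{k=1}^{l}H^1(Y,\CO(D_k))(-R),
\]
with each summand one-dimensional when $k$ is a valid index and zero otherwise. Order the valid indices as $i_1<\cdots<i_N$, pick generators $\xi_{i_a}$ of the corresponding summands, and write the image of $\pi(i_a)$ as $\sum_b c_{ab}\xi_{i_b}$. By the preceding corollary, this image is represented by a \v{C}ech cocycle supported on $\tv(\rho_{i_a})$ (with component $y^{-1}e_{D_{i_a}}$) and on $\tv(\rho_{m+2})$ (with components $-y^{-1}e_{D_{i_a-1}}+y^{-1}e_{D_{m+1}}$). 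The $D_{m+1}$-piece projects into the vanishing summand since $m+1$ is not valid, while the $D_{i_a-1}$-piece projects to zero unless $i_a-1$ is itself valid, in which case it necessarily equals $i_{a-1}$. Hence $c_{ab}=0$ whenever $b>a$, so $M=(c_{ab})$ is lower triangular with nonzero entries only on the diagonal and first subdiagonal.

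\emph{Nonvanishing diagonal, the main obstacle.} The crux is showing $c_{aa}\neq 0$, i.e.\ that the \v{C}ech cocycle with value $y^{-1}=\chi^{-R}$ on $\tv(\rho_{i_a})$ and zero on every other adjacent intersection represents a nonzero class in $H^1(Y,\CO(D_{i_a}))(-R)$. I would argue this by a direct coboundary check. A grade-$(-R)$ section of $\CO(D_{i_a})$ over $\tv(\sigma_j)$ exists precisely when every ray of $\sigma_j$ other than $\rho_{i_a}$ pairs non-positively with $R$; this fails on the two maximal cones $\sigma_{i_a-1}$ and $\sigma_{i_a}$ containing $\rho_{i_a}$, whose remaining rays $\rho_{i_a-1},\rho_{i_a+1}$ lie in $\{\rho_1,\ldots,\rho_{m+1}\}$ and so pair strictly positively with $R$. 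Any candidate $0$-cochain $(\eta_j)$ is thus forced to satisfy $\eta_{i_a-1}=\eta_{i_a}=0$, incompatible with the coboundary requirement $\eta_{i_a}-\eta_{i_a-1}=y^{-1}$. This contradiction yields $c_{aa}\neq 0$, so $M$ is invertible and the $\pi(i)$ form a basis of $T_Y^1(-R)$.
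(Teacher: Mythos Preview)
Your approach is the same as the paper's: count dimensions via Corollary~\ref{cor:t1forsurfaces}, then establish linear independence by projecting to the summands $H^1(Y,\CO(D_k))(-R)$ and checking that the relevant class is not a coboundary using $\Gamma(\tv(\sigma_{i_a-1}),\CO(D_{i_a}))(-R)=\Gamma(\tv(\sigma_{i_a}),\CO(D_{i_a}))(-R)=0$. That last step is exactly the paper's argument.

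There is, however, a concrete error in your explicit cocycle. You read off from the preceding corollary that $g_{m+1,m+2}=-y^{-1}e_{D_{i_a-1}}+y^{-1}e_{D_{m+1}}$, so that the image of $\pi(i_a)$ has components on $D_{i_a}$, $D_{i_a-1}$, and $D_{m+1}$. But then the projection onto $\CO(D_{i_a})$ is the cochain with value $y^{-1}$ on $\tv(\rho_{i_a})$ and zero on every other $\tv(\rho_j)$; this is \emph{not} a \v{C}ech cocycle, since $\sum_j g_{j-1,j}=y^{-1}\neq 0$ (compare the remark following the Kodaira--Spencer theorem). So the ``cohomology class'' you test for nonvanishing is not well defined. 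The index condition in that corollary should read $a_{j-1}\neq a_j$ rather than $a_j\neq a_{j+1}$; with this correction the only contributing index for $\pi(i_a)$ is $j=i_a$, and one gets $g_{i_a-1,i_a}=y^{-1}e_{D_{i_a}}$ together with $g_{m+1,m+2}=-y^{-1}e_{D_{i_a}}$, supported \emph{entirely} on $\CO(D_{i_a})$. Your triangular matrix then becomes diagonal, the off-diagonal bookkeeping disappears, and the proof reduces to the paper's two-line verification.
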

	\begin{proof}
		From the above corollary we know that the image of $\pi(i)$ in $\bigoplus_{j=1}^l H^1(Y,\CO(D_j))$ is the \v{C}ech cocycle induced by $g_{i-1,i}=y^{-1}\cdot e_{D_i}$ and $g_{m+1,m+2}=-y^{-1}\cdot e_{D_i}$. This cocycle is not cohomologous to $0$. Indeed, since $\Gamma(\tv(\sigma_i),\CO(D_i))(-R)=\Gamma(\tv(\sigma_{i-1}),\CO(D_i))(-R)=0$, there is no element in the zeroth term of the \v{C}ech complex of $\CO(D_i)$ whose image is the above cocycle.
We have thus seen that for each $2\leq i \leq m$ with $\langle \nu(\rho_i), R \rangle =1$ that the image of $\pi(i)$ is a non-trivial cocycle. Clearly these cocycles are linearly independent, since each is supported on a different bundle $\CO(D_i)$. Finally, from corollary \ref{cor:t1forsurfaces} we see that they must span $T_Y^1(-R)$.
	\end{proof}

\begin{figure}
	\begin{center}
		\begin{displaymath}
		\begin{array}{r c}
\Xi_0:&
		\psset{unit=2cm,linewidth=.05cm}
		\begin{pspicture}(-2,0)(2,.7)
\psline{<-]}(-2,0)(-.6,0)
\psline{[-]}(-.6,0)(0,0)
\psline{[-]}(0,0)(.4,0)
\psline{[->}(.4,0)(2,0)
\rput(0,-.25){$0$}
\rput(.4,-.25){$\frac{1}{\alpha}$}
\rput(-.6,-.25){$\frac{1}{\alpha-r}$}
\rput(1.5,.2){$\Delta_0^0$}
\rput(.2,.2){$\Delta_0^1$}
\rput(-0.3,.2){$\Delta_0^2$}
\rput(-1.5,.2){$\Delta_0^3$}
\end{pspicture} \\
\\
\widetilde{\Xi}_0:&
		\psset{unit=2cm,linewidth=.05cm}
		\begin{pspicture}(-2,0)(2,.7)
\psline{<-]}(-2,0)(0,0)
\psline{[-]}(0,0)(.4,0)
\psline{[->}(.4,0)(2,0)
\rput(0,-.25){$0$}
\rput(.4,-.25){$\frac{1}{\alpha}$}
\rput(1.5,.2){$\widetilde{\Delta}_0^0$}
\rput(.2,.2){$\widetilde{\Delta}_0^1$}
\rput(-1.5,.2){$\widetilde{\Delta}_0^3$}
\rput(-.1,.4){$\widetilde{\Delta}_0^2$}
\psline[linewidth=.01cm](-.1,.26)(0,.13)
\end{pspicture} \\
\\
\widetilde{\Xi}_t:&
\psset{unit=2cm,linewidth=.05cm}
		\begin{pspicture}(-2,0)(2,.7)
\psline{<-]}(-2,0)(-.6,0)
\psline{[-]}(-.6,0)(0,0)
\psline{[->}(0,0)(2,0)
\rput(0,-.25){$0$}
\rput(-.6,-.25){$\frac{1}{\alpha-r}$}
\rput(1.5,.2){$\widetilde{\Delta}_t^0$}
\psline[linewidth=.01cm](0,.26)(0,.13)
\rput(0,.4){$\widetilde{\Delta}_t^1$}
\rput(-.3,.2){$\widetilde{\Delta}_t^2$}
\rput(-1.5,.2){$\widetilde{\Delta}_t^3$}
\end{pspicture} \\
\\
\widetilde{\Xi}_\infty:&
\psset{unit=2cm,linewidth=.05cm}
		\begin{pspicture}(-2,0)(2,.7)
\psline{<-]}(-2,0)(0,0)
\psline{[->}(0,0)(2,0)
\rput(0,-.25){$0$}
\rput(1.5,.2){$\widetilde{\Delta}_\infty^4$}
\rput(-1.5,.2){$\widetilde{\Delta}_\infty^5$}
\end{pspicture} \\

\end{array}
\end{displaymath}
	\end{center}
	\caption{A subdivision decomposition for $\mathcal{F}_r$}\label{fig:hirzsubdv}
\end{figure}
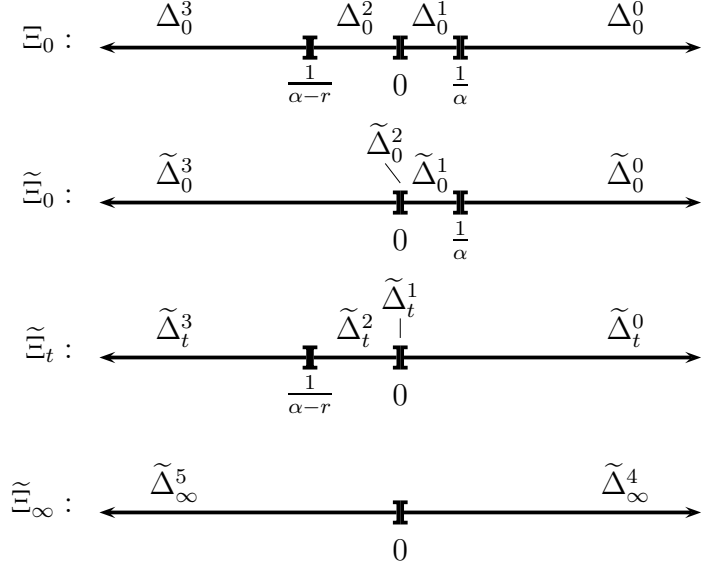

\begin{ex}
	We shall conclude with another example, namely the Hirzebruch surface $Y=\mathcal{F}_r$. Here everything is already known; if $r=1$, then $Y$ is Fano and thus rigid. Otherwise, $\mathcal{F}_r$ can be deformed to exactly those Hirzebruch surfaces $\mathcal{F}_s$ with $0\leq s< r$ and $s \equiv r \mod 2$, where by $\mathcal{F}_0$ we mean $\mathbb{P}^1\times \mathbb{P}^1$. We shall construct these deformations using subdivision decompositions, but first we turn our attention to $T_Y^1$.

First note that	$\mathcal{F}_r=\tv(\Sigma)$ where $\Sigma$ has rays $\rho_1$, $\rho_2$, $\rho_3$, and $\rho_4$ generated by $(1,0)$, $(0,1)$, $(-1,r)$, and $(0,-1)$, respectively. We immediately see that $\dim T_Y^1(u)=1$ for $u=[-\alpha,-1]$ with $r>\alpha>0$ and otherwise zero. Indeed, it is exactly for these weights $u$ that $\langle \nu(\rho_2),u\rangle =-1$, $\langle \nu(\rho_1),u\rangle <0$, and $\langle \nu(\rho_3),u\rangle <0$. In particular, we see that $\dim T_Y^1=r-1$.

Now fix some degree $-R=[-\alpha,-1]$ with $r>\alpha>0$; the fan $\Sigma$ induces a polyhedral subdivision $\Xi_0$ on the hyperplane $\langle \cdot,R\rangle = 1$. If we take $\nu(\rho_2)$ to be the origin, then $\Xi_0$ is $\mathbb{Q}$ with subdivisions at $1/(\alpha-r)$, $0$, and $1/\alpha$. Ignoring shifts with $\lambda_0$, possible subdivision decompositions are $a=(1,1,-1,-1)$ and $a=(-1,-1,1,1)$; the decomposition corresponding to $a=(1,1,-1,-1)$ is pictured in figure \ref{fig:hirzsubdv}. The images of the corresponding deformations by the Kodaira-Spencer map differ only by sign.

Using the language of T-varieties we can see what the general fiber of the above deformation is. If we take $a=(1,1,-1,-1)$, then the general fiber is the T-variety corresponding to the divisorial fan $\widetilde{\Xi}=\widetilde{\Xi}_0\otimes \{0\}+\widetilde{\Xi}_t\otimes\{t\}+\widetilde{\Xi}_\infty\otimes \{\infty\}$ on $\mathbb{P}^1$. Since the coefficient $\widetilde{\Xi}_\infty$ is trivial, this in fact corresponds to the toric variety whose fan $\widetilde{\Sigma}$ is induced by the decomposition $\widetilde{\Xi}_0$ embedded in height one and $\widetilde{\Xi}_t$ embedded in height minus one. In other words, $\widetilde{\Sigma}$ has rays through the points $(0,1)$, $(1,\alpha)$, $(0,-1)$, and $(-1,\alpha-r)$. Using a lattice automorphism, we see that this is the fan for $\mathcal{F}_{r-2\alpha}$ when $r\geq 2\alpha$ and the fan for $\mathcal{F}_{2\alpha-r}$ when $r\leq 2\alpha$. Thus, we see that we can deform $\mathcal{F}_r$ exactly to those other Hirzebruch surfaces mentioned above. Furthermore, for $0< s< r$ with $s \equiv r \mod 2$ there are exactly two degrees in which there are homogeneous deformations deforming $\mathcal{F}_r$ to $\mathcal{F}_s$, namely $[-(r+s)/2,-1]$ and $[-(r-s)/2,-1]$; if $s=0$ then the single degree in which such a deformation exists is $[-r/2,-1]$.

\end{ex}

\bibliography{tsdef}
\address{Mathematisches Institut\\
Freie Universit\"at Berlin\\
Arnimallee 3\\
14195 Berlin, Germany}{nilten@cs.uchicago.edu}

\end{document}